\newtheorem{theorem}{Theorem}[section]
\newtheorem{definition}[theorem]{Definition}
\newtheorem{example}[theorem]{Example}
\newtheorem{lemma}[theorem]{Lemma}
\newtheorem{remark}[theorem]{Remark}
\newcommand{\Lk}{\mathrm{lk}}
\newcommand{\Dl}{\mathrm{del}}
\newcommand{\I}{\mathcal{I}}
\newcolumntype{x}[1]{>{\centering\arraybackslash}p{#1}}
\newcommand{\del}[1]{\textup{del}{\left(#1\right)}}
\newcommand{\link}[1]{\textup{lk}{\left(#1\right)}}
\newcommand{\wpoint}[1]{\underset{#1}{\vee}}
\newcommand{%
    \def\svgwidth{1\columnwidth}
    \import{./figures/}{.pdf_tex}
}[2][1]{%
    \def\svgwidth{#1\columnwidth}
    \import{./figures/}{#2.pdf_tex}
}
\renewcommand{\star}{*}
\begin{document}
\title{Independence complexes of wedge of graphs}

\author[N. Daundkar]{Navnath Daundkar}
\address{Department of Mathematics, Indian Institute of Technology Bombay, India}
\email{navnathd@iitb.ac.in}
\author[S. Panja]{Saikat Panja}
\address{Department of Mathematics, Harish Chandra Research Institute Prayagraj, India}
\email{saikatpanja@hri.res.in, panjasaikat300@gmail.com}
\author[S. Prasad]{Sachchidanand Prasad}
\address{Department of Mathematics, International Centre for Theoretical Sciences Bangalore, India}
\email{sachchidanand.prasad@icts.res.in,
sachchidanand.prasad1729@gmail.com}

\thanks{The first author is supported by IIT Bombay postdoctoral fellowship, the second author is supported by HRI PDF-M fellowship and the third author is supported by ICTS postdoctoral fellowship} 

\begin{abstract}
    In this article, we introduce the notion of a wedge of graphs and provide detailed computations for the independence complex of a wedge of path and cycle graphs. In particular, we show that these complexes are either contractible or wedges of spheres. 
\end{abstract}
\keywords{Independence complex, Fold lemma, Wedge of graphs}
\subjclass[2020]{05C69, 55P15, 05C10}
\maketitle

\section{Introduction}\label{sec:intro}
A \emph{graph} is an order pair $G=(V,E)$, where $V$ is called the set of vertices and $E$ is called the set of edges. The set $E$ is consist of $2$-element subsets of $V$. A subject topological combinatorics is consist of the study of homotopy invariants
of certain cell complexes constructed using the graph to obtain combinatorial information about the graph $G$. The first example of this is the Lov{\'a}sz \cite{Lovaz} celebrated proof of the Kneser conjecture. The neighborhood complex $\mathcal{N}(G)$ of a graph $G$ is an abstract simplicial complex whose simplices are subsets of the vertex set $V$ having common neighbors.
Lov{\'a}sz uses the connectivity of neighborhood complex $\mathcal{N}(G)$ to compute a lower bound on the chromatic number of the corresponding graph. 
Lov{\'a}sz stated the similar conjecture that the lower bound on the chromatic number of a graph $G$ can be given in terms of the connectivity of certain cell complexes associated with cycle graph and $G$. These complexes are known as \emph{hom complexes}. Babson and Kozlov \cite{Lovazconj} proved Lov{\'a}sz conjecture, where they relate these hom complexes to \emph{independence complexes}.

 An abstract simplcial complex consist of all independent subsets of $V$ is called the \emph{independence complex} of $G$. It is denoted by $\mathcal{I}(G)$.  
 In the last two decades, the general problem of determining all the possible homotopy type of independence complexes for various classes of graphs has received considerable attention. For example, Kozlov \cite{Koz99} determined the homotopy type of independence complexes for path and cycle graphs, Kawamura \cite{indchordal, indforest} for chordal graphs and forests, Bousquet-M\'{e}lou, Mireille and Linusson, Svante and Nevo, Eran \cite{indsquaregrid} for square-grid graphs, Engstr{\"o}m \cite{Engstrclawfree} investigated this problem for claw-free graphs, and Raun \cite{Braunindstbknes} for stable Kneser graphs  etc. 

The purpose of this paper is to introduce the notion of a wedge of graphs and compute the all possible homotopy type of independence complexes of wedges of some classes of graphs. In particular, we consider the class of wedges of path graphs and cycle graphs. 
We also describe the relationships between combinatorial and topological invariants associated with the wedge of graphs with the corresponding combinatorial and topological invariants of the components. We hope these computations will be helpful in other parts of mathematics.

The paper is organized as follows: We begin \Cref{sec:prelim} by defining some combinatorial and topological objects associated with a graph. Then we introduce the notion of a wedge of graphs. We end this section by recalling some results about the homotopy type of independence complexes of path graphs and cycle graphs. 

Finally in \Cref{sec:res}, we prove the main results of this paper. Here, we compute all possible homotopy types of the wedge of finitely many path graphs and wedge of finitely many different cycle graphs. Next, we consider the wedge of cycle and path graphs with respect to different base points and comupute all possible homotopy types of corresponding independence complexes.

\section{Preliminaries}\label{sec:prelim}
In this section, we will define the main object of our study.
A couple of examples will follow this. 
Also, we will mention the \emph{fold lemma}, which will be a main ingredient to some of the proofs. Furthermore, we will mention some of the previously proven results, which will be 
used to prove our main theorems. 

\begin{definition}
An abstract simplicial complex $K$ is a collection of subsets of $\{v_1,\dots, v_n\}$, such that 
\begin{enumerate}
\item $\{ v_i \} \in K$ for all $1\leq i\leq n$,
\item if $\sigma \in K$ and $\tau\subseteq \sigma$, then $\tau\in K$.
\end{enumerate}
\end{definition}
The elements of $K$ are called faces. The \emph{dimension} of a face $\sigma$ is defined as $|\sigma|-1$. 
In this paper, by simplicial complex we mean a geometric realization of an abstract simplicial complex. Without loss of generality we use simplicial complex for an abstract simplicial complex.

Now we define an important simplicial complex associated with a graph.
\begin{definition}[Independence complex]
For a finite simple graph $G$, with the vertex set $V$, the independence complex $\mathcal{I}(G)$ is the simplicial complex consisting of
all independent (i.e. no two vertices are adjacent) subsets of $V$ as its simplices.
\end{definition}
The independence complex of a graph $G$ is denoted by $\mathcal{I}(G)$.

\begin{example}
We now see some examples of the independence complex of graphs.
\begin{enumerate}
\item Let $P_3$ be a path graph on theree vertices $\{1,2,3\}$. Then  
\[\I(P_3)=\bigg\{\{1\},\{2\},\{3\},\{1,3\}\bigg\}.\]
Hence the independence complex is homotopy equivalent to $\mathbb{S}^0$, which will be written as $\mathcal{I}(P_3)\cong \mathbb{S}^0$ (see \Cref{fig:indp3}).
\begin{figure}[H]
    \centering
    \includegraphics[scale=0.9]{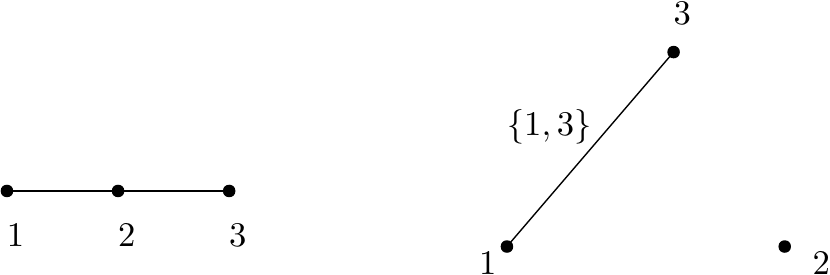}
    \caption{Path graph $P_3$ and $\I(P_3)$.}
    \label{fig:indp3}
\end{figure}

\item Let $C_4$ the cycle graph on $4$ vertices $\{1,2,3,4\}$. Then the independence complex is 
\[\I(C_4)=\bigg\{\{1\},\{2\},\{3\},\{4\},\{1,3\}, \{2,4\}\bigg\}.\]
Hence we get that $\mathcal{I}(C_4)\cong \mathbb{S}^0$ (see \Cref{fig:indc4}).
\begin{figure}[H]
    \centering
    \includegraphics[scale=0.9]{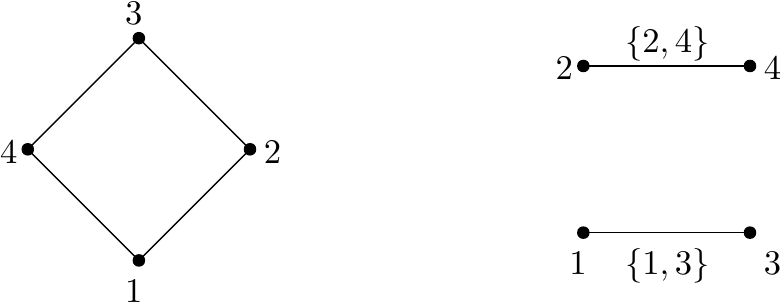}
    \caption{Cycle graph $C_4$ and $\I(C_4)$.}
    \label{fig:indc4}
\end{figure}
\end{enumerate}

\end{example}

A \emph{subcomlex} of a simplicial complex is a simplicial complex whose faces are contained in $K$.
There are two important subcomplexes associated with any simplicial complex.

\begin{definition}
Let $K$ be a (abstract) simplicial complex. The link of a vertex $v\in K$ is defined as 
\[\Lk (v,K) :=\{\sigma\in K \mid v\notin \sigma \text{ and } \sigma\cup {v}\in K \}.\]    
\end{definition}

\begin{definition}
Let $K$ be a (abstract) simplicial complex. The deletion of a vertex $v\in K$ is defined as 
\[\Dl(v,K) :=\{\sigma\in K \mid v\notin \sigma \} .\]    
\end{definition}
Observe that for any vertex in $K$, the subcomplex $\Lk(v,K)$ is a subcomplex of $\Dl(v,K)$.
The following is an important result which describes the homotopy type of a simplicial complex in terms of the link and deletion.
\begin{lemma}[{\cite[Lemma 2]{singhgrid}\label{lem: del link}}]
Let $K$ be a simplicial complex and $v\in K$ be a vertex such that $\Lk(v,K)$ is contractible in $\Dl(v,K)$. Then $K\simeq \Dl(v,K)\vee \sum \Lk(v,K)$.   
\end{lemma}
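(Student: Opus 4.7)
The plan is to decompose $K$ in the standard star--deletion way and then invoke a classical fact about mapping cones of null--homotopic inclusions.

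First I would observe that $K$ admits the pushout decomposition
\[
K = \Dl(v,K)\,\cup\,\St(v,K), \qquad \Dl(v,K)\,\cap\,\St(v,K) = \Lk(v,K),
\]
where $\St(v,K)$ denotes the closed star of $v$, i.e.\ all faces of $K$ whose union with $\{v\}$ is still a face, together with their subfaces. Since $\St(v,K)$ is the simplicial cone on $\Lk(v,K)$ with apex $v$, it is contractible, and the above pushout exhibits $K$ as the mapping cone of the inclusion $i\colon \Lk(v,K)\hookrightarrow \Dl(v,K)$:
\[
K \;\cong\; \Dl(v,K)\,\cup_{i}\, C\!\bigl(\Lk(v,K)\bigr).
\]

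Next I would apply the following general topological principle: if $A\hookrightarrow X$ is a cofibration that is null--homotopic as a map into $X$, then $X\cup_i CA \simeq X\vee \Sigma A$. This is the step where the hypothesis ``$\Lk(v,K)$ is contractible in $\Dl(v,K)$'' is used. The homotopy equivalence is built explicitly: a null--homotopy $H\colon \Lk(v,K)\times I\to \Dl(v,K)$ from $i$ to a constant map extends over the cone $C(\Lk(v,K))$, and one uses it to reshuffle the attaching data so that one half of the cone retracts into $\Dl(v,K)$ while the other half collapses its base to a point, producing the suspension $\Sigma\Lk(v,K)$ wedged on at that point.

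For a simplicial/CW implementation I would note that the inclusion $\Lk(v,K)\hookrightarrow \Dl(v,K)$ is automatically a cofibration (subcomplex inclusion), so the general principle applies directly. The only slightly delicate step is verifying the mapping--cone identity $X\cup_i CA\simeq X\vee \Sigma A$ under the null--homotopy hypothesis; this is the main ``obstacle'' in the sense of being the one non--formal input, but it is a standard exercise (e.g.\ using the fact that the mapping cone depends only on the homotopy class of the attaching map and that the attaching map is now null--homotopic, so it factors through a point, giving the wedge with the unreduced suspension). Everything else is bookkeeping about the star--link--deletion decomposition.
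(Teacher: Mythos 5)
Your argument is correct, and it is the standard proof of this fact: decompose $K$ as $\Dl(v,K)\cup\St(v,K)$ with intersection $\Lk(v,K)$, recognize $K$ as the mapping cone of the inclusion $\Lk(v,K)\hookrightarrow\Dl(v,K)$, and use that a mapping cone of a null-homotopic cofibration splits as $\Dl(v,K)\vee\Sigma\Lk(v,K)$. Note that the paper itself gives no proof here --- it cites the result from Goyal--Shukla--Singh --- and the cited source's proof proceeds along essentially the same lines as yours, so there is nothing to flag beyond the routine care you already mention about basepoints and the homotopy invariance of mapping cones for subcomplex inclusions.
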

Now we define the central object of study in this paper. This will be followed by a few examples. Note that the definition depends on the choice of the so-called `wedge point'.
\begin{definition}[Wedge of graphs]
Given a finite family of graphs $G_i=(V_i,E_i)$, and $a_i\in V_i$ for all $i$, an \emph{wedge of graphs} is defined to be a graph $G=(V,E)$ such that 
\begin{align*}
        V&=\left(\bigcup_i V_i\setminus\{a_i\}\right)\cup \{a\},\\
        E&=\left(\bigcup_iE_i\setminus\{e_j\in E_i|a_i\in\partial(e_j)\}\right)\cup\left\{ab_k|a_ib_k\in E_i\right\}.
\end{align*}
    The common point $a$ will be called \emph{wedge point}.
\end{definition}

\begin{remark}
Let $\chi(G)$ be the chromatic number of a graph. Then we can observe that \[\chi(G_1\vee G_2)=\text{max}\{\chi(G_1),\chi(G_2)\}.\]    
\end{remark}

\begin{example}
    Consider two path graphs $P_3$ and $P_4$  on $3,4$ vertices respectively. Then choosing different wedge points, we can obtain different wedge graphs. We describe some of them here (see \Cref{fig:p3p4wedge}).

    \begin{figure}[H]
        \centering
        \includegraphics[scale=.7]{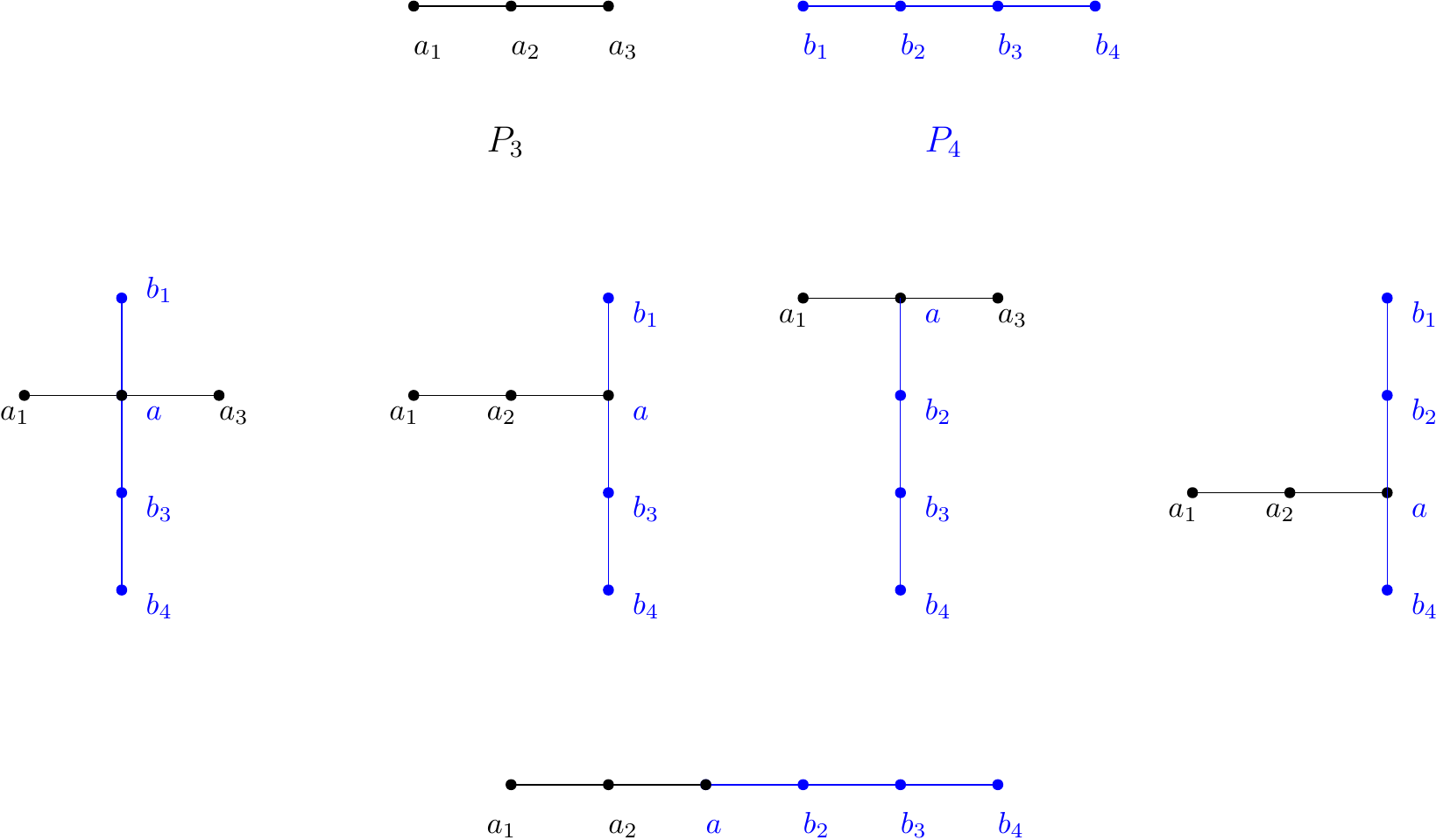}
        \caption{Different wedges graphs of $P_3$ and $P_4$}
        \label{fig:p3p4wedge}
    \end{figure}
\end{example}
\begin{remark}
Note that if we vary the wedge points, the obtained wedge graphs \textcolor{black}{need not be} isomorphic. Furthermore, they do not have a homotopic independence complex as well.
Consider the first and the last wedge graph of $P_3$ and $P_4$ described in \Cref{fig:p3p4wedge}. Let us denote them by $G$ and $H$, respectively. Then one can see that $G$ and $H$ are not isomorphic. Moreover,  $\I(G)\simeq \star$ and $\I(H)\simeq S^1$.
\end{remark}
Note that we have introduced the main two terms of the title, we mention one of the key ingredients for the proof of our theorems. \begin{theorem}[Fold lemma]\label{lemma:foldLemma}\cite[Lemma 3.4]{Engstrclawfree}
    Let $G$ be a graph, and $v\neq w$ vertices of $G$. If $N(v)\subseteq N(w)$ then the inclusion $\mathcal{I}(G\setminus w )\hookrightarrow \mathcal{I}(G)$ is a homotopy equivalence.  
\end{theorem}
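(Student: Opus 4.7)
The plan is to construct an explicit simplicial retraction $r \colon \mathcal{I}(G) \to \mathcal{I}(G \setminus w)$ that ``folds'' $w$ onto $v$ and is contiguous to the identity, so the stated inclusion becomes a homotopy equivalence with $r$ as its homotopy inverse. Before anything else, I would confirm that the hypothesis $N(v) \subseteq N(w)$ forces $v$ and $w$ to be non-adjacent: if $w$ were a neighbor of $v$, then $w \in N(v) \subseteq N(w)$, which is impossible in a simple graph because $w \notin N(w)$.

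Next, define $r$ on vertices by $r(w) := v$ and $r(u) := u$ for every other vertex $u$. The heart of the argument is verifying that $r$ extends to a simplicial map into $\mathcal{I}(G \setminus w)$, i.e.\ that $r(\sigma)$ is independent in $G \setminus w$ for every $\sigma \in \mathcal{I}(G)$. If $w \notin \sigma$, this is immediate. If $w \in \sigma$, then independence of $\sigma$ gives $\sigma \cap N(w) = \emptyset$, and the hypothesis $N(v) \subseteq N(w)$ forces $\sigma \cap N(v) = \emptyset$; combined with the non-adjacency of $v$ and $w$, this shows that $r(\sigma) = (\sigma \setminus \{w\}) \cup \{v\}$ is still independent and contains no copy of $w$.

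Because $r$ restricts to the identity on $\mathcal{I}(G \setminus w)$, it is a retraction. To upgrade this to a deformation retraction I would show that the simplicial maps $\mathrm{id}_{\mathcal{I}(G)}$ and $i \circ r$ are contiguous, that is, for every $\sigma \in \mathcal{I}(G)$ the union $\sigma \cup r(\sigma)$ is again a face. When $w \notin \sigma$ this union equals $\sigma$, and when $w \in \sigma$ it equals $\sigma \cup \{v\}$, which is independent by the same calculation since $v$ is non-adjacent to $w$ and to every vertex of $\sigma$. Contiguous simplicial maps induce homotopic continuous maps on geometric realizations, so $i$ and $r$ are mutually inverse up to homotopy, which is exactly the claim.

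The argument is mostly combinatorial bookkeeping; the only delicate point is the initial derivation of non-adjacency from the neighborhood inclusion, and without it the retraction would not be well-defined. As an alternative, one could invoke \Cref{lem: del link} at the vertex $w$: the link $\Lk(w, \mathcal{I}(G))$ identifies with $\mathcal{I}(G - N[w])$, and the hypothesis makes $v$ a cone apex of this link, so it is contractible, and the wedge decomposition collapses to $\mathcal{I}(G) \simeq \mathcal{I}(G \setminus w)$; I prefer the explicit retraction route because it exhibits the specific map named in the theorem rather than some abstract equivalence.
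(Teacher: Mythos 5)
The paper does not prove this statement at all --- it is quoted verbatim from Engstr\"om \cite{Engstrclawfree} as a known result --- so there is no internal proof to compare against; I can only judge your argument on its own. It is correct and complete. The preliminary observation that $N(v)\subseteq N(w)$ forces $v\not\sim w$ (else $w\in N(w)$) is exactly the point on which everything hinges, and you flag it appropriately. The verification that $r(\sigma)=(\sigma\setminus\{w\})\cup\{v\}$ is independent is right: $\sigma\cap N(w)=\emptyset$ gives $\sigma\cap N(v)=\emptyset$, and non-adjacency of $v,w$ handles the remaining pair; the degenerate case $v\in\sigma$ merely drops the dimension, which a simplicial map is allowed to do. The contiguity check $\sigma\cup r(\sigma)=\sigma\cup\{v\}\in\mathcal{I}(G)$ follows from the same computation, and contiguous maps are homotopic on realizations, so $i$ and $r$ are mutually inverse up to homotopy. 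Your sketched alternative --- $\Lk(w,\mathcal{I}(G))\cong\mathcal{I}(G-N[w])$ is a cone with apex $v$, so the star of $w$ glues onto $\Dl(w,\mathcal{I}(G))=\mathcal{I}(G\setminus w)$ along a contractible subcomplex --- is also valid and is closer in spirit to how such fold lemmas are usually derived and to how this paper uses \Cref{lem: del link} elsewhere; the retraction route has the advantage of producing the explicit homotopy inverse and of showing the equivalence is realized by the stated inclusion, which is what the theorem literally asserts.
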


We will end this section by mentioning two important results, due to Kozlov. This will be required in the proof of our theorem, as we are going to consider a few classes of wedges of these graphs. 

\begin{theorem}[{\cite[Proposition 4.6]{Koz99}\label{thm:ICofPm}}]
    Let $P_m$ be the path graph on $m$ vertices. Then
    \begin{equation}\label{eq:ICofPm}
        \mathcal{I}(P_m) \simeq 
        \begin{cases}
            \mathbb{S}^{k-1},& \text{ if } m=3k \\
            \text{pt},       & \text{ if } m=3k+1 \\
            \mathbb{S}^{k},  & \text{ if } m=3k+2.
        \end{cases}
    \end{equation}
\end{theorem}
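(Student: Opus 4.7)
The plan is to establish the recursion $\mathcal{I}(P_m) \simeq \Sigma\, \mathcal{I}(P_{m-3})$ for $m \geq 4$ using the fold lemma, and then close the argument by induction on $m$ once the three base cases corresponding to the residues of $m$ modulo $3$ have been handled.

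First I would verify the base cases $m = 1, 2, 3$ directly. The complex $\mathcal{I}(P_1)$ is a single vertex, hence a point; $\mathcal{I}(P_2)$ is two disjoint vertices, hence $\mathbb{S}^0$; and $\mathcal{I}(P_3) \simeq \mathbb{S}^0$ as shown in the earlier example. These agree with the stated formula with $k=0$ for $m \equiv 1, 2 \pmod 3$ and with $k=1$ for $m \equiv 0 \pmod 3$.

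For $m \geq 4$, label the vertices $v_1, v_2, \ldots, v_m$ with edges $\{v_i, v_{i+1}\}$. Observe that $N(v_1) = \{v_2\} \subseteq \{v_2, v_4\} = N(v_3)$, so the fold lemma (\Cref{lemma:foldLemma}) applied with $v = v_1$ and $w = v_3$ yields $\mathcal{I}(P_m) \simeq \mathcal{I}(P_m \setminus v_3)$. Deleting $v_3$ disconnects the path into two components, namely the edge $\{v_1, v_2\}$ and the subpath on $v_4, \ldots, v_m$, so $P_m \setminus v_3 \cong P_2 \sqcup P_{m-3}$. Because the independence complex of a disjoint union of graphs is the simplicial join of the individual independence complexes, and because joining with $\mathbb{S}^0 \simeq \mathcal{I}(P_2)$ is the suspension operation, we obtain
\[
    \mathcal{I}(P_m) \;\simeq\; \mathcal{I}(P_2) \star \mathcal{I}(P_{m-3}) \;\simeq\; \mathbb{S}^0 \star \mathcal{I}(P_{m-3}) \;\simeq\; \Sigma\, \mathcal{I}(P_{m-3}).
\]

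Iterating the recursion inside each residue class modulo $3$ reduces $m$ by three at every step while adding one suspension, and the induction terminates at one of the three base cases: $\Sigma^{k-1}\mathbb{S}^0 \simeq \mathbb{S}^{k-1}$ when $m = 3k$, $\Sigma^k(\text{pt}) \simeq \text{pt}$ when $m = 3k+1$, and $\Sigma^k \mathbb{S}^0 \simeq \mathbb{S}^k$ when $m = 3k+2$, which matches the claimed formula. The argument is essentially mechanical once the recursion is in hand; the only mild subtlety is that the fold lemma inclusion $N(v_1) \subseteq N(v_3)$ requires $v_4$ to exist, which forces $m \geq 4$, and this is precisely why $m=3$ is handled as a separate base case. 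As an alternative derivation of the recursion, one could instead apply \Cref{lem: del link} at the vertex $v_2$: then $\Lk(v_2,\mathcal{I}(P_m)) = \mathcal{I}(P_{m-3})$ and $\Dl(v_2,\mathcal{I}(P_m)) = \mathcal{I}(\{v_1\}\sqcup P_{m-2})$ is a simplicial cone (hence contractible), yielding the same suspension identity.
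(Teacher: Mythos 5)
Your proof is correct. One point of context: the paper does not actually prove this statement --- it is quoted verbatim from Kozlov (\cite[Proposition 4.6]{Koz99}) as background, so there is no internal proof to compare against. That said, your argument is a clean, self-contained derivation, and it uses precisely the technique the paper itself relies on elsewhere: the recursion $\mathcal{I}(P_m)\simeq \mathbb{S}^0\star\mathcal{I}(P_{m-3})\simeq\Sigma\,\mathcal{I}(P_{m-3})$ obtained from \Cref{lemma:foldLemma} with $v=v_1$, $w=v_3$ is exactly the reduction step invoked in the proof of \Cref{thm:pathWedgePath} (there with $v=a_1$, $w=a_3$) and again in \Cref{thm:pathWedgePathMany}. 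Your base cases are right ($\mathcal{I}(P_1)\simeq\mathrm{pt}$, $\mathcal{I}(P_2)\simeq\mathcal{I}(P_3)\simeq\mathbb{S}^0$), the identification of $\mathcal{I}(G_1\sqcup G_2)$ with the join $\mathcal{I}(G_1)\star\mathcal{I}(G_2)$ is standard, and the bookkeeping $\Sigma^{k-1}\mathbb{S}^0\simeq\mathbb{S}^{k-1}$, $\Sigma^k(\mathrm{pt})\simeq\mathrm{pt}$, $\Sigma^k\mathbb{S}^0\simeq\mathbb{S}^k$ matches the claimed trichotomy. Your alternative route via \Cref{lem: del link} at $v_2$ is also sound: since $\Dl(v_2,\mathcal{I}(P_m))$ is a cone with apex the isolated vertex $v_1$, it is contractible, hence the inclusion of $\Lk(v_2,\mathcal{I}(P_m))\cong\mathcal{I}(P_{m-3})$ into it is null-homotopic and the lemma gives the same suspension identity. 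The only cosmetic remark is that you correctly flag why $m\ge 4$ is needed for the fold ($v_4$ must exist), which is the right level of care.
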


\begin{theorem}[{\cite[Proposition 5.2]{Koz99}\label{thm:ICofCn}}]
    Let $C_n$ be the cycle graph on $n$ vertices. Then
    \begin{equation}\label{eq:ICofCn}
        \mathcal{I}(C_n) \simeq 
        \begin{cases}
            \mathbb{S}^{k-1}\vee \mathbb{S}^{k-1},& \text{ if } n=3k \\
            \mathbb{S}^{k-1},       & \text{ if } n=3k+1 \\
            \mathbb{S}^{k},  & \text{ if } n=3k+2.
        \end{cases}
    \end{equation}
\end{theorem}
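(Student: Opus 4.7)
The plan is to apply \Cref{lem: del link} at a single vertex of $C_n$ and reduce the computation to the path case already given by \Cref{thm:ICofPm}. I will label the vertices of $C_n$ cyclically as $1, 2, \ldots, n$ and focus on $v = 1$. Removing $v$ from $C_n$ yields the path graph $P_{n-1}$ on $\{2, 3, \ldots, n\}$, giving $\Dl(1, \I(C_n)) = \I(P_{n-1})$. Meanwhile, a face $\sigma$ lies in $\Lk(1, \I(C_n))$ exactly when $\sigma \cup \{1\}$ is independent in $C_n$, which forces $\sigma \subseteq \{3, 4, \ldots, n-1\}$; the induced subgraph on this set is the path $P_{n-3}$, so $\Lk(1, \I(C_n)) = \I(P_{n-3})$.

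To invoke \Cref{lem: del link}, I need the inclusion $\I(P_{n-3}) \hookrightarrow \I(P_{n-1})$ to be null-homotopic, and this is where \Cref{thm:ICofPm} does the heavy lifting. If $n = 3k+1$, then $\I(P_{n-3}) \simeq \text{pt}$ is already contractible. If $n = 3k+2$, then $\I(P_{n-1}) \simeq \text{pt}$, so any map into it is null-homotopic. If $n = 3k$ with $k \geq 2$, then $\I(P_{n-3}) \simeq \mathbb{S}^{k-2}$ and $\I(P_{n-1}) \simeq \mathbb{S}^{k-1}$; since $\pi_{k-2}(\mathbb{S}^{k-1}) = 0$, every such map is null-homotopic. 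The boundary case $n = 3$ should be handled directly from the definition: $\I(C_3)$ consists of three isolated points, homotopy equivalent to $\mathbb{S}^0 \vee \mathbb{S}^0$, matching the formula with $k = 1$.

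Granted the hypothesis, \Cref{lem: del link} yields $\I(C_n) \simeq \I(P_{n-1}) \vee \Sigma \I(P_{n-3})$, and substituting \Cref{thm:ICofPm} will give the three claimed cases: $\mathbb{S}^{k-1} \vee \Sigma \mathbb{S}^{k-2} \simeq \mathbb{S}^{k-1} \vee \mathbb{S}^{k-1}$ when $n = 3k$, $\mathbb{S}^{k-1} \vee \Sigma(\text{pt}) \simeq \mathbb{S}^{k-1}$ when $n = 3k+1$, and $(\text{pt}) \vee \Sigma \mathbb{S}^{k-1} \simeq \mathbb{S}^k$ when $n = 3k+2$.

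The main obstacle is verifying the null-homotopy hypothesis of \Cref{lem: del link}; conveniently, \Cref{thm:ICofPm} pins both homotopy types down as either a point or a single sphere, reducing the check in each residue class to an elementary observation about a map either into a contractible space or between spheres of different dimensions. Everything else is routine bookkeeping, with the only care needed in the small-$n$ boundary cases.
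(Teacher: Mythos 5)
Your argument is correct. Note, however, that the paper offers no proof of this statement at all: it is quoted directly from Kozlov \cite[Proposition 5.2]{Koz99} as background, so there is no internal proof to compare against. Your derivation is a sound, self-contained alternative to the citation: the identifications $\Dl(1,\I(C_n))=\I(P_{n-1})$ and $\Lk(1,\I(C_n))=\I(P_{n-3})$ are right, the verification of the hypothesis of \Cref{lem: del link} is complete in each residue class (for $n=3k$ the inclusion lives in $[\mathbb{S}^{k-2},\mathbb{S}^{k-1}]$, which is a single class since $\mathbb{S}^{k-1}$ is path-connected and $\pi_{k-2}(\mathbb{S}^{k-1})=0$ in the relevant range), and the bookkeeping $\I(P_{n-1})\vee\Sigma\,\I(P_{n-3})$ reproduces all three cases of \cref{eq:ICofCn}. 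You were also right to treat $n=3$ separately, since there the link is the empty complex and the suspension convention becomes delicate. It is worth observing that your decomposition is precisely the template the authors use later for wedges of cycles (\Cref{lemma: Gmn as delvslk} and \Cref{lem:gen-cyc-wedge}), where the deletion and link at the wedge point become joins of path complexes; so your proof of the single-cycle case is the $k=1$ instance of the paper's main technique rather than a departure from it.
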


\section{Main results}\label{sec:res}
We begin this section by proving a sufficient condition for the link of a vertex to be contractible inside deletion. This is crucial in using the \Cref{lem: del link}. Then we compute the homotopy type of wedge of path graphs and a wedge of cycle graphs.  
\begin{lemma}\label{lemma:contractibleReduction}
    Let $G$ be a graph, and $v\in \mathcal{I}(G)$ be a vertex of $G$. Let $\sigma\in \del{v}$ be a maximal simplex such that $\sigma\notin \link{v,\I(G)}$ and $\del{v,\I(G)}\setminus \{\sigma\}$ is contractible. Then
    \begin{displaymath}
        \mathcal{I}(G) \simeq \del{v,\I(G)}\vee \Sigma \left( \link{v,\I(G)}\right).
    \end{displaymath}   
\end{lemma}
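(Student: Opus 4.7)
The plan is to reduce the statement to the invocation of \Cref{lem: del link}. That lemma already promises the conclusion $\mathcal{I}(G) \simeq \del{v,\I(G)}\vee \Sigma \link{v,\I(G)}$ as soon as we verify its hypothesis, namely that $\link{v,\I(G)}$ is contractible inside $\del{v,\I(G)}$. So my entire task is to produce a null-homotopy of the inclusion $\link{v,\I(G)} \hookrightarrow \del{v,\I(G)}$, using the two hypotheses on $\sigma$.

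The key intermediate claim I would establish is the set-theoretic inclusion
\[
    \link{v,\I(G)} \;\subseteq\; \del{v,\I(G)} \setminus \{\sigma\}.
\]
To justify this, first note that the right-hand side actually is a simplicial complex: because $\sigma$ is a maximal face of $\del{v,\I(G)}$, there is no face strictly containing $\sigma$ in $\del{v,\I(G)}$, so removing only $\sigma$ does not destroy the downward-closure property. Next, every face $\tau \in \link{v,\I(G)}$ lives in $\del{v,\I(G)}$ by definition, and since $\sigma$ itself is explicitly excluded from $\link{v,\I(G)}$ by hypothesis, no such $\tau$ can equal $\sigma$. This gives the displayed inclusion.

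With the inclusion in hand, I would factor the inclusion of interest as
\[
    \link{v,\I(G)} \;\hookrightarrow\; \del{v,\I(G)}\setminus\{\sigma\} \;\hookrightarrow\; \del{v,\I(G)}.
\]
Since the middle term is contractible by hypothesis, the first inclusion is null-homotopic, and hence so is the composition. Thus $\link{v,\I(G)}$ is contractible inside $\del{v,\I(G)}$, and \Cref{lem: del link} applied with $K=\I(G)$ immediately yields the desired homotopy equivalence.

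I do not anticipate a genuine obstacle here; the statement is essentially a packaging of \Cref{lem: del link} in a form that is convenient for the subsequent applications to wedges of paths and cycles. The only place to be careful is the verification that $\del{v,\I(G)}\setminus\{\sigma\}$ is a simplicial complex, which is precisely where the maximality assumption on $\sigma$ is used; without maximality one could not remove $\sigma$ alone without dragging along its cofaces, and the factorization argument would break down.
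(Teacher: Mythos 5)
Your proposal is correct and follows essentially the same route as the paper: both observe that $\sigma\notin \link{v,\I(G)}$ forces $\link{v,\I(G)}\subseteq \del{v,\I(G)}\setminus\{\sigma\}$, so the inclusion of the link into $\del{v,\I(G)}$ factors through a contractible subcomplex and \Cref{lem: del link} applies. Your additional remark that the maximality of $\sigma$ is exactly what guarantees $\del{v,\I(G)}\setminus\{\sigma\}$ is still a simplicial complex is a detail the paper leaves implicit, but it is the same argument.
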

\begin{proof}
    Since $\sigma\notin \link{v},~\link{v}\subseteq \del{v}\setminus\{\sigma\}$, and $\del{v}\setminus\{\sigma\}$ is contractible hence we have $\link{v}$ is contractible in $\del{v}$. Then the result follows from \Cref{lem: del link}.
\end{proof}
We start with the case when the wedge is taken to be of two path graphs. We give a complete description of all the cases.
In the later part of the section, this will be generalized in the case of a wedge of finitely many path graphs, using the concept of the terminal wedge.
\begin{theorem}\label{thm:pathWedgePath}
    Let $P_l$ be the path graph on $l$ vertices. Then 
    \begin{equation}\label{eq:pathWedgePath}
        \mathcal{I}\left(P_m \wpoint{a} P_n \right)
    \end{equation}
    is either a point or a sphere.
\end{theorem}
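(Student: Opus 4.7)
The plan is to exploit the fact that $G := P_m \wpoint{a} P_n$ is always a tree with at most one vertex of degree greater than $2$, namely $a$, whose degree in $G$ is $2$, $3$, or $4$ depending on whether the wedge points are endpoints or interior vertices of $P_m$ and $P_n$. Accordingly I will view $G$ as $k$ internally disjoint paths of lengths $l_1,\dots,l_k \ge 1$ meeting only at $a$, where $k \in \{2,3,4\}$, and induct on $L := l_1 + \cdots + l_k$. For $k = 2$ there is nothing to do: $G \cong P_{m+n-1}$, and \Cref{thm:ICofPm} says directly that $\I(G)$ is a point or a sphere. So the real work is the case $k \in \{3,4\}$.

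The inductive step I envision is a ``fold-and-suspend'' argument. If some leg, say the $i$-th, has length $l_i \ge 3$, label its vertices $a, x_1, \dots, x_{l_i}$ in order. Then $N(x_{l_i}) = \{x_{l_i-1}\} \subseteq N(x_{l_i-2})$, so the Fold Lemma (\Cref{lemma:foldLemma}) lets me delete $x_{l_i-2}$. This disconnects the edge $\{x_{l_i-1},x_{l_i}\}$ from the rest, and since $\I(A \sqcup B) = \I(A) * \I(B)$ with $\I(P_2) = \S^0$, I obtain $\I(G) \simeq \S^0 * \I(G') = \Sigma\,\I(G')$, where $G'$ is a strictly smaller spider of the same general type (with one fewer leg when $l_i = 3$). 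Since suspension preserves the property ``contractible or sphere,'' induction closes this case.

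When instead all $l_i \in \{1,2\}$, I handle two small base cases. If every $l_i = 1$, then $G$ is the star $K_{1,k}$; its independence complex is the disjoint union of the simplex $\Delta^{k-1}$ on the leaves and the isolated vertex $\{a\}$, hence is homotopy equivalent to $\S^0$. Otherwise some $l_i = 2$; then the endpoint $x_2$ of that leg satisfies $N(x_2) = \{x_1\} \subseteq N(a)$, and the Fold Lemma now lets me delete $a$. The resulting graph $G \setminus a$ is a disjoint union of $p$ isolated vertices (one per length-$1$ leg) and $q \ge 1$ copies of $P_2$ (one per length-$2$ leg), so by the join formula $\I(G)$ is a join of $p$ contractible spaces with $q$ copies of $\S^0$. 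Such a join is contractible when $p \ge 1$ and equal to $\S^{q-1}$ when $p = 0$.

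The main obstacle is essentially bookkeeping: I need to check that the hypotheses of the Fold Lemma genuinely hold at each reduction, that the boundary case $l_i = 3$ in the inductive step is handled correctly (deleting $x_1$ shortens that leg to length $0$, so the spider loses one leg and may collapse to a graph already covered by \Cref{thm:ICofPm}), and that the finite list of base-case spiders with all $l_i \in \{1,2\}$ is correctly enumerated and does yield only points or spheres. Each of these is routine once the spider-graph picture is set up.
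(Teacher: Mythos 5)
Your proof is correct, and its engine is the same as the paper's: the Fold Lemma applied to a leaf $v$ and the vertex $w$ two steps further along the leg, which deletes $w$, splits off a disjoint $P_2$, and hence contributes a join with $\mathbb{S}^0$, i.e.\ a suspension. The difference lies in how the reduction is organized and how far it reaches. The paper takes the wedge point to be an endpoint of $P_m$ (so the wedge is a spider with at most three legs), reduces $m$ modulo $3$ along that single path, and then disposes of the residual cases $m\in\{1,2,3\}$ by ad hoc folds expressing $\I(P_m\wpoint{a}P_n)$ as a join of complexes $\I(P_j)$, finishing with Kozlov's formula \eqref{eq:ICofPm}. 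You instead treat the wedge symmetrically as a spider with $k\in\{2,3,4\}$ legs and induct on the total leg length, shortening any leg of length at least $3$; your base cases are the finitely many spiders with all legs of length $1$ or $2$, which you compute directly ($\I(K_{1,k})\simeq\mathbb{S}^0$; otherwise fold away the center to obtain a join of simplices and $0$-spheres). Your organization buys two things: it invokes \Cref{thm:ICofPm} only in the degenerate two-leg case, and, more importantly, it covers the four-legged spider arising when the wedge point is interior to both paths --- a configuration the paper's written proof does not explicitly treat, since it assumes $a=a_m$. The paper's version, in exchange, reads off the exact homotopy type immediately from \eqref{eq:ICofPm} rather than leaving it to be extracted from the induction. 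All the fold steps you list do satisfy the hypotheses of \Cref{lemma:foldLemma} (in each case $v$ is a leaf whose unique neighbour is adjacent to the deleted vertex $w\neq v$), and your base-case computations are correct.
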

\begin{proof}
    If the wedge point is the terminal point for both of the graphs, then the wedge graph is $P_{m+n-1}$ hence the independence complex can be found using \autoref{thm:ICofPm}. Now suppose that $a=a_m=b_l,~m\ge 4$. Then Using \autoref{lemma:foldLemma} (with $v=a_1$ and $w=a_3$), we have that 
    \begin{align*}
        \mathcal{I}(P_m\wpoint{a}P_n) \simeq \mathbb{S}^0 \star \mathcal{I}(P_{m-3} \wpoint{a} P_n).
    \end{align*} 
    Thus we only need to compute the independence complexes for the cases $m=1,2$ and $3$ (Referring to the diagram \Cref{fig:PmWedgePn_type-1}, we assume $n=l+k$).
    \begin{figure}[H]
        \centering
    \def\svgwidth{0.5\columnwidth}
    \import{./figures/}{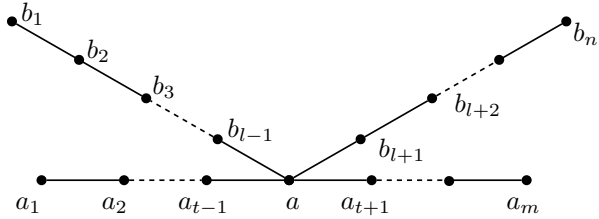}

        \caption{Wedge of path graphs.}
        \label{fig:PmWedgePn_type-1}
    \end{figure}\
    \paragraph{\textcolor{black}{\textbf{Case 1} $m=1$}} In this case the wedge is same as $P_n$.

    \paragraph{\textcolor{black}{\textbf{Case 2} $m=2$}} Take $v=a_1$ and $w=b_{l-1}$. Then using \autoref{lemma:foldLemma}
    \begin{displaymath}
        \mathcal{I}(P_2\wpoint{a}P_n) \simeq \mathcal{I}(P_{l-2}) \star \mathcal{I}(P_{n-l+2}).
    \end{displaymath}  

    \paragraph{\textcolor{black}{\textbf{Case 3} $m=3$}} Take $v=a_1$ and $w=a$. Then using \autoref{lemma:foldLemma},
    \begin{displaymath}
        \mathcal{I}(P_3\wpoint{a}P_n) \simeq \mathbb{S}^0 \star \mathcal{I}(P_{l-1}) \star \mathcal{I}(P_{n-l}).
    \end{displaymath}
    Now, the result follows from \cref{eq:ICofPm}.
\end{proof}
The case of a wedge of the cycle graphs is not straightforward and hence we need to do a more careful analysis. This is an almost replica of Kozlov's work. 
But here we have more than one cycle, so we need to consider the maximal simplex appropriately inside the deletion which doesn't belong to the link. Here is the statement of the main result. 
\begin{lemma}\label{lemma: Gmn as delvslk}
 Let $C_m \wpoint{a} C_n$ be the wedge of cycle graphs and $G_{m,n}=\mathcal{I}\left(C_m \wpoint{a} C_n \right)$. Then \[G_{m,n}\simeq \del{a, G_{m,n})}\vee \sum \link{a,G_{m,n}}.\]
\end{lemma}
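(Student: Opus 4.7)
The plan is to apply \Cref{lemma:contractibleReduction} with $v = a$, the wedge vertex, following the strategy sketched in the discussion preceding the lemma (an adaptation of Kozlov's single-cycle argument, with extra care in selecting the maximal simplex). The first step is to identify $\del{a, G_{m,n}}$ and $\link{a, G_{m,n}}$ explicitly. Removing $a$ from $C_m \wpoint{a} C_n$ disconnects the wedge into the two paths $P_{m-1}$ on $\{a_2, \dots, a_m\}$ and $P_{n-1}$ on $\{b_2, \dots, b_n\}$, and since the independence complex of a disjoint union of graphs is the simplicial join of the individual independence complexes,
\[
\del{a, G_{m,n}} = \I(P_{m-1}) \star \I(P_{n-1}), \qquad \link{a, G_{m,n}} = \I(P_{m-3}) \star \I(P_{n-3}),
\]
the second equality following by excising the closed neighborhood $N[a] = \{a, a_2, a_m, b_2, b_n\}$ from $C_m \wpoint{a} C_n$.

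Next I pick a maximal simplex $\sigma$ of $\del{a, G_{m,n}}$ with $\sigma \notin \link{a, G_{m,n}}$. Maximal simplices in a join are unions of maximal simplices of the two factors, so I take $\sigma_1 = \{a_2, a_4, a_6, \dots\}$, a maximal independent set of $P_{m-1}$ that contains the neighbor $a_2$ of $a$, together with any maximal independent set $\sigma_2$ of $P_{n-1}$, and set $\sigma = \sigma_1 \cup \sigma_2$. Since $a_2 \in \sigma$ is adjacent to $a$, the set $\sigma \cup \{a\}$ fails to be independent in $G_{m,n}$, which is exactly the statement that $\sigma \notin \link{a, G_{m,n}}$.

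The main task, and the main obstacle, is then to verify that $\del{a, G_{m,n}} \setminus \{\sigma\}$ is contractible. I would establish this by an elementary collapsing scheme inside the join $\I(P_{m-1}) \star \I(P_{n-1})$: once the top simplex $\sigma$ is removed, its codimension-one faces (together with those of its principal cofaces) become free, and a sequence of collapses should deformation retract $\del{a, G_{m,n}} \setminus \{\sigma\}$ onto a star subcomplex of $\del{a, G_{m,n}}$, which is a cone and hence contractible. The precise shape of this collapse depends on whether each factor $\I(P_{m-1})$ and $\I(P_{n-1})$ is contractible or a sphere, which by \Cref{thm:ICofPm} is controlled by $m$ and $n$ modulo $3$; the required case analysis parallels Kozlov's in the single-cycle case. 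Once contractibility of $\del{a, G_{m,n}} \setminus \{\sigma\}$ is in hand, \Cref{lemma:contractibleReduction} immediately delivers the desired homotopy equivalence $G_{m,n} \simeq \del{a, G_{m,n}} \vee \Sigma \link{a, G_{m,n}}$.
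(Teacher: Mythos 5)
Your identification of $\del{a,G_{m,n}}=\I(P_{m-1})\star\I(P_{n-1})$ and $\link{a,G_{m,n}}=\I(P_{m-3})\star\I(P_{n-3})$ is correct (indeed more careful than the paper, which records the link as $\I(P_{m-2})\star\I(P_{n-2})$). But the step you defer -- that $\del{a,G_{m,n}}\setminus\{\sigma\}$ is contractible for a suitably chosen maximal simplex $\sigma$ -- is not merely unproven; it is false in several congruence classes, so the uniform reduction to \Cref{lemma:contractibleReduction} cannot be repaired by a cleverer collapsing scheme. Take $m=n=5$. Then $\del{a,G_{m,n}}=\I(P_4)\star\I(P_4)$; each factor is a contractible $1$-dimensional complex all of whose maximal faces are edges, so the join is contractible and every one of its maximal simplices is a $3$-simplex. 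Removing any maximal simplex $\sigma$ leaves its boundary $2$-sphere unbounded, and the long exact sequence of the pair gives $H_2\left(\del{a,G_{m,n}}\setminus\{\sigma\}\right)\cong H_3\left(\del{a,G_{m,n}},\del{a,G_{m,n}}\setminus\{\sigma\}\right)\cong\mathbb{Z}$. Hence no choice of $\sigma$ makes the punctured deletion contractible. The same obstruction appears whenever $\del{a,G_{m,n}}$ is already contractible, i.e.\ whenever $m$ or $n$ is congruent to $2$ modulo $3$.

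The paper's proof is organized precisely to sidestep this: it splits on $m,n \bmod 3$ and observes that in all but one residue class either $\link{a,G_{m,n}}$ or $\del{a,G_{m,n}}$ is itself contractible (by \Cref{thm:ICofPm} applied to the join factors), in which case the inclusion $\link{a,G_{m,n}}\hookrightarrow\del{a,G_{m,n}}$ is nullhomotopic for free and \Cref{lem: del link} applies directly, with no simplex removed. The maximal-simplex device of \Cref{lemma:contractibleReduction} is invoked only in the single residual case where the deletion is a homotopy sphere carried by its top-dimensional facets, which is the only situation in which deleting one facet can plausibly kill the homology. (With your corrected link formula, the leftover case is $m\equiv n\equiv 0\pmod 3$, where $\link{a,G_{m,n}}\simeq\mathbb{S}^{k+k'-3}$ includes into $\del{a,G_{m,n}}\simeq\mathbb{S}^{k+k'-1}$ and is nullhomotopic by dimension reasons, so one can in fact avoid \Cref{lemma:contractibleReduction} entirely.) You should restructure your argument along these lines; as written, the collapse you promise does not exist in general.
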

\begin{proof}
We denote $\I\left(C_m \wpoint{a} C_n \right)$ by $G_{m,n}$.
 We show that    $\link{a,G_{m,n}}$ is contractible in $\del{a, G_{m,n}}$. 
 Then the result follows from \Cref{lem: del link}. 
 Observe that, in general we have 
\[\link{a,G_{m,n}}\simeq \I(P_{m-2})*\I(P_{n-2}) ~~\text{ and }~~ \del{a,G_{m,n}}\simeq \I(P_{m-1})*\I(P_{n-1}).\]
 We consider the following cases:
 
\noindent{}\textbf{Case 1}\emph{ $n=3k$ or $m=3k'$.}
\vspace{.2mm}

In this case, either $\I(P_{m-2})\sim \I(P_{3(k-1)+1})$ or $\I(P_{n-2})\simeq \I(P_{3(k'-1)+1})$. From \Cref{thm:ICofPm}, we get that either $\I(P_{m-2})$ or $\I(P_{n-2})$ is contractible. Therefore, $\link{a,G_{m,n}}$ is contractible. This, from \Cref{lem: del link} result follows. 

\noindent{}\textbf{Case 2}\emph{ $n=3k+1$ or $m=3k'+1$.}
\vspace{.2mm}

Now consider the maximal simplex  \[\sigma=\{a_1,a_3,\dots,a_{3k-3},a_{3k-1}\}\cup\{b_1,b_3,\dots,b_{3k'-3},b_{3k'-1}\}.\] Then $\sigma\in\del{a,G_{m,n}}\setminus \link{a,G_{m,n}}$. We also have $\del{a,G_{m,n}}\simeq \I(P_{3k})*\I(P_{3k'})\simeq \mathbb{S}^{k+k'+1}$ using \Cref{thm:ICofPm}. Therefore, $\link{a,G_{m,n}}\setminus {\sigma}$ is contractible. Now  the result follows from \Cref{lemma:contractibleReduction}. 

\noindent{}\textbf{Case 2}\emph{ $n=3k+1$ and $m=3k'+2$.}
\vspace{.2mm}

In this case, $\del{a,G_{m,n}}$ is contractible. Therefore, the result follows from \Cref{lem: del link}.

In the remaining cases 

\noindent{}\textbf{Case 3}\emph{ $n=3k+2$ and $m=3k'+1$}
 and 

\noindent{}\textbf{Case 4}\emph{ $n=3k+2$ and $m=3k'+2$}
\vspace{.2mm} we get that $\del{a,G_{m,n}}$  is contractible. Therefore, the result follows from \Cref{lem: del link}.
Finally, in any case we have $G_{m,n}\simeq \del{a, G_{m,n})}\vee \Sigma \link{a,G_{m,n}}$.
\end{proof}
Now we consider the general case, where we have a wedge of $k$-many cycle graphs, with the wedge point to be $a$. Consider the cycle graphs $C_{m_i}$ with vertices $\{a_1^i,a_2^i,\ldots,a_{m_i}^i\}$. Let $G=\vee_{i=1}^kC_{m_i}$. Then observe that \[\link {a,\I(\bigvee_{i=1}^k C_{m_i})}\simeq *_{i=1}^k\I(P_{m_i-2}) ~~\text{ and }~~ \del{v,\I(\bigvee_{i=1}^k C_{m_i})}\simeq *_{i=1}^k\I(P_{m_i-1}).\] Now one can see that if one of the $m_i$ is of the form $3k$ or $3k+2$, then either $\link{a,}$  or $\del{a,}$ is contractible. Now if all $m_i$'s are of the form $3l_{i}+1$, then consider the maximal simplex  \[\sigma=\{a^1_1,a^1_3,\dots,a^1_{3k-3},a^1_{3l_1-1}\}\cup\dots\cup\{a^k_1,a^k_3,\dots,a^k_{3k'-3},a^k_{3l_k-1}\}.\] 
Note that $\sigma\in \del{a, G_{m_1,\dots,m_k}}\setminus \link{a,G_{m_1,\dots,m_k}}$. Observe that $\del{a,G_{m_1,\dots,m_k}}\simeq \mathbb{S}^{\Sigma_{i}^{k}l_i-1}$. Therefore, $\del{a,G_{m_1,\dots,m_k}}\setminus\{\sigma\}$ is contractible.
Therefore, using \Cref{lemma:contractibleReduction} we have the following.

\begin{lemma}\label{lem:gen-cyc-wedge}
Let $G=\vee_{i=1}^{k}C_{m_i}$ be the wedge of $k$-many cycle graphs and $G_{m_1,\dots,m_k}$ be its independence complex. Then $G_{m_1,\dots,m_k}\simeq \del{a,G_{m_1,\dots,m_k}}\vee \Sigma \link{a,G_{m_1,\dots,m_k}}$.    
\end{lemma}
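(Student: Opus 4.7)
The plan is to extend the two-cycle argument of \Cref{lemma: Gmn as delvslk} by the same case analysis modulo $3$, applying either \Cref{lem: del link} or \Cref{lemma:contractibleReduction} depending on the arithmetic of the $m_i$. The starting point is the pair of join decompositions
\[
\link{a, \I(G)} \simeq *_{i=1}^{k} \I(P_{m_i - 2}), \qquad \del{a, \I(G)} \simeq *_{i=1}^{k} \I(P_{m_i - 1}),
\]
which follow because deleting $a$ (respectively $a$ together with its neighbors in each cycle) disconnects the wedge into the appropriate disjoint union of paths, and the independence complex of a disjoint union of graphs is the join of the individual independence complexes.

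Next I would split on the residues of the $m_i$ modulo $3$. If some $m_i \equiv 0 \pmod 3$ then $m_i - 2 \equiv 1 \pmod 3$, so \Cref{thm:ICofPm} forces $\I(P_{m_i - 2})$ to be contractible, making the entire join, and hence $\link{a,\I(G)}$, contractible. Symmetrically, if some $m_i \equiv 2 \pmod 3$ then $\I(P_{m_i - 1})$, and hence the deletion $\del{a,\I(G)}$, is contractible. In either situation the hypothesis of \Cref{lem: del link} is satisfied trivially, producing the claimed splitting.

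The remaining case, where every $m_i$ has the form $3 l_i + 1$, is the heart of the argument. Here \Cref{thm:ICofPm} gives $\I(P_{m_i - 1}) = \I(P_{3 l_i}) \simeq \mathbb{S}^{l_i - 1}$, so that $\del{a, \I(G)}$ is homotopy equivalent to a single sphere of dimension $\sum_{i} l_i - 1$. I plan to exhibit the explicit maximum independent set
\[
\sigma = \bigcup_{i=1}^{k} \{ a^i_1, a^i_3, \dots, a^i_{3 l_i - 1} \},
\]
check that it lies in $\del{a, \I(G)} \setminus \link{a, \I(G)}$ (since it contains the neighbor $a^i_1$ of $a$ in each component), and then invoke the observation that removing a top-dimensional facet from a sphere leaves a contractible complex. \Cref{lemma:contractibleReduction} then yields the claimed wedge decomposition.

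The main obstacle I anticipate is executing the maximal-simplex case cleanly: one must verify that $\sigma$ is genuinely a facet of $\del{a,\I(G)}$ (no further vertex of any component can be added while preserving global independence) and that $\del{a,\I(G)} \setminus \{\sigma\}$ really is contractible rather than something more delicate. Both points rest on the identification of the deletion with a single sphere of the correct dimension; once that is in place, the rest of the proof is routine assembly.
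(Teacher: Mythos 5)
Your proposal follows the paper's own argument essentially verbatim: the same join decompositions of the link and deletion into $*_{i}\I(P_{m_i-2})$ and $*_{i}\I(P_{m_i-1})$, the same case split on the residues of the $m_i$ modulo $3$ (with \Cref{lem: del link} handling the cases where some $m_i\equiv 0$ or $2$), and the same maximal simplex $\sigma$ fed into \Cref{lemma:contractibleReduction} when every $m_i\equiv 1 \pmod 3$. The delicate point you flag at the end --- that removing the facet $\sigma$ from $\del{a,\I(G)}$ actually leaves a contractible complex, given that the deletion is only \emph{homotopy equivalent} to a sphere --- is the same step the paper asserts without further justification, so your plan is neither more nor less complete than the published argument.
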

We are now ready to compute the homotopy type of the independence complex of a wedge of cycle graphs. Although this will be further generalized in the later part, we present the complete computation to ease the reader's mind.
\begin{theorem}\label{thm:cycleWedgeCycle}
Let $C_m \wpoint{a} C_n$ be the wedge of cycle graphs and $G_{m,n}$ be the independence complex of $C_m \wpoint{a} C_n$. Then $G_{m,n}$ is contractible or homotopy equivalent to  wedge of spheres.
\end{theorem}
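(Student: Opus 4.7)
The plan is to apply Lemma \ref{lemma: Gmn as delvslk}, which already supplies the splitting $G_{m,n}\simeq \del{a,G_{m,n}}\vee \Sigma\link{a,G_{m,n}}$, and then to show that each of the two wedge summands is individually either contractible or a single sphere. Granting this, $G_{m,n}$ becomes a wedge of at most two summands each of which is a point or a sphere, and collapsing the contractible summands yields the theorem.

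To establish the claim about the summands, I would use the identifications that already surfaced in the proof of Lemma \ref{lemma: Gmn as delvslk}:
\[
\link{a,G_{m,n}}\simeq \mathcal{I}(P_{m-2})*\mathcal{I}(P_{n-2}), \qquad \del{a,G_{m,n}}\simeq \mathcal{I}(P_{m-1})*\mathcal{I}(P_{n-1}).
\]
By Theorem \ref{thm:ICofPm}, each independence complex of a path is either contractible or a sphere. Combined with the standard join identities $\mathrm{pt}*X\simeq \mathrm{pt}$ and $\mathbb{S}^{a}*\mathbb{S}^{b}\simeq \mathbb{S}^{a+b+1}$, this forces both $\link{a,G_{m,n}}$ and $\del{a,G_{m,n}}$ to be either contractible or a single sphere. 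Since suspension sends a point to a point and a sphere $\mathbb{S}^{j}$ to $\mathbb{S}^{j+1}$, the summand $\Sigma\link{a,G_{m,n}}$ inherits the same dichotomy, and the conclusion follows.

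There is essentially no genuine obstacle, because all the delicate work has been absorbed into Lemma \ref{lemma: Gmn as delvslk}; the remaining step is a formal consequence of how joins and suspensions act on points and spheres. If one wanted a quantitative refinement pinning down the precise dimensions of the spheres appearing in the wedge, it would reduce to a short tabulation based on the residues of $m$ and $n$ modulo $3$, but this is not required for the qualitative statement as formulated.
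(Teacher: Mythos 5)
Your proposal is correct and follows essentially the same route as the paper: both rest on Lemma \ref{lemma: Gmn as delvslk} for the splitting $G_{m,n}\simeq \del{a,G_{m,n}}\vee \Sigma\link{a,G_{m,n}}$, the identifications of the link and deletion as joins of path independence complexes, and Theorem \ref{thm:ICofPm}. The only difference is that the paper carries out the explicit case analysis on the residues of $m,n$ modulo $3$ to record the precise sphere dimensions, whereas you observe (correctly) that the qualitative dichotomy already follows formally from how joins and suspensions act on points and spheres.
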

\begin{proof}
        \label{tab:label}

We consider the following cases: 

\noindent{}\textbf{Case 1} \emph{either  $n=3k$ or $m=3k'$}
\vspace{.2mm}

In this case $\link{a,G_{m,n}}$ is contractible as we saw in the first case of \Cref{lemma: Gmn as delvslk}. Therefore, by \Cref{lemma: Gmn as delvslk}, we have $G_{m,n}\simeq \del{a,G_{m,n}}\simeq \I(P_{m-1})*\I(P_{n-1})$. Now we have following subcases using \Cref{thm:ICofPm}: 
\begin{enumerate}
\item \emph{If $m=3k$ and $n=3k$}. In this case, $G_{m,n}\simeq\del{a, G_{m,n}}\simeq \mathbb{S}^{k-1}*\mathbb{S}^{k'-1}=\mathbb{S}^{k+k'-1}$.
\item \emph{If $m=3k$ and $n=3k'+1$.} In this case, $G_{m,n}\simeq\del{a, G_{m,n}}\simeq \mathbb{S}^{k-1}*\mathbb{S}^{k'}=\mathbb{S}^{k+k'}$.
\item \emph{If $m=3k$ and $n=3k'+2$.} In this case, $G_{m,n}\simeq\del{a, G_{m,n}}\simeq \mathbb{S}^{k-1}* \{pt\}=\{pt\}$.
\item \emph{If $m=3k+1$ and $n=3k'$.} In this case, $G_{m,n}\simeq\del{a, G_{m,n}}\simeq \mathbb{S}^{k}*\mathbb{S}^{k'-1}=\mathbb{S}^{k+k'}$.
\item \emph{If $m=3k+2$ and $n=3k'$.} In this case, $G_{m,n}\simeq\del{a, G_{m,n}}\simeq \{pt\}* \mathbb{S}^{k'-1} =\{pt\}$.
\end{enumerate}

\noindent{}\textbf{Case 2} \emph{$m=3k+1$, $n=3k^\prime +1$.}
\vspace{.2mm}

Consider the following diagram \Cref{fig: Gmn} of wedge of two cycles where the wedge point $a$ is taken to be $a_{3k+1}=b_{3k^\prime +1}$. 

\begin{figure}[h]
    \centering
    \includegraphics[scale=1]{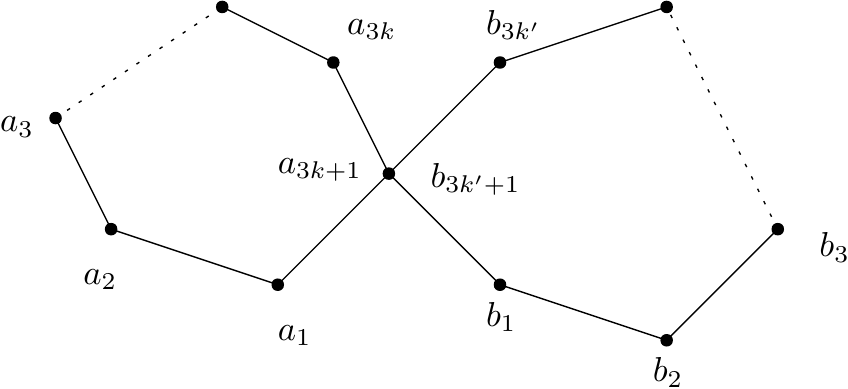}
    \caption{Wedge of $C_{3k+1}$ and $C_{3k'+1}$}
    \label{fig: Gmn}
\end{figure}

    \noindent Consider the simplex
    \[\sigma = \left\{ a_1,a_3,a_6,a_9,\cdots,a_{3k} \right\} \cup \left\{ b_1,b_3,b_6,\cdots,b_{3k^\prime } \right\} .
    \] 
    Since $\sigma \in \del{a}\setminus \link{a}$ and $\del{a}\setminus \{\sigma \}$ is contractible in $\del{a}$, we get using \Cref{lemma:contractibleReduction} that 
    \begin{align*}
        G_{m,n} & = \del{a,G_{m,n}}\vee \Sigma \link{a,G_{m,n}}\\
        & = \mathbb{S} ^{k+k^\prime -1} \vee \mathbb{S} ^{k+k^\prime},
    \end{align*}
    as $\del{a,G_{m,n}}\simeq \I(P_{m-1})*\I(P_{n-1})\simeq \I(P_{3k})*\I(P_{3k'})\simeq \mathbb{S}^{k+k'-1}$ and $\link{a, G_{m,n}}\simeq \I(P_{m-2})*\I(P_{n-2})\simeq \I(P_{3k-1})*\I(P_{3k'-1})\simeq \I(P_{3(k-1)+2})*\I(P_{3(k'-1)+2})\simeq \mathbb{S}^{k+k'-1}$.

\noindent{}\textbf{Case 3} \emph{$m=3k+2$, $n=3k^\prime +1$.}
\vspace{.2mm}  

In this case, we have $\del{a,G_{m,n}}$
is contractible as shown in \Cref{lemma: Gmn as delvslk}. Therefore, using \Cref{lemma: Gmn as delvslk}, we have $G_{m,n}\simeq \Sigma \link{a,G_{m,n}}$. Therefore,
\begin{align*}
    G_{m,n} & \simeq \Sigma \bigg(\I(P_{m-2})*\I(P_{n-2})\bigg)\\
    &\simeq \Sigma \bigg(\I(P_{3k})*\I(P_{3(k'-1)+2})\bigg)\\
    &\simeq \Sigma \bigg(\mathbb{S}^{k-1}*\mathbb{S}^{k'-1} \bigg) \simeq \mathbb{S}^{k+k'}.
\end{align*}

\noindent{}\textbf{Case 4} \emph{$m=3k+1$, $n=3k^\prime +2$.}
\vspace{.2mm}  

In this case, we have $\del{a,G_{m,n}}$
is contractible as shown in \Cref{lemma: Gmn as delvslk}. Therefore, using \Cref{lemma: Gmn as delvslk}, we have $G_{m,n}\simeq \Sigma \link{a,G_{m,n}}$. Therefore,
\begin{align*}
    G_{m,n} & \simeq \Sigma \bigg(\I(P_{m-2})*\I(P_{n-2})\bigg)\\
    &\simeq \Sigma \bigg(\I(P_{3(k-1)+2})*\I(P_{3k'})\bigg)\\
    &\simeq \Sigma \bigg(\mathbb{S}^{k-1}*\mathbb{S}^{k'-1} \bigg) \simeq \mathbb{S}^{k+k'}.
\end{align*}

\noindent{}\textbf{Case 5} \emph{$m=3k+2$, $n=3k^\prime +2$.}
\vspace{.2mm}  

In this case, we have $\del{a,G_{m,n}}$
is contractible as shown in \Cref{lemma: Gmn as delvslk}. Therefore, using \Cref{lemma: Gmn as delvslk}, we have $G_{m,n}\simeq \Sigma \link{a,G_{m,n}}$.
Therefore,
\begin{align*}
    G_{m,n} & \simeq \Sigma \bigg(\I(P_{m-2})*\I(P_{n-2})\bigg)\\
    &\simeq \Sigma \bigg(\I(P_{3k})*\I(P_{3k'})\bigg)\\
    &\simeq \Sigma \bigg(\mathbb{S}^{k-1}*\mathbb{S}^{k'-1} \bigg) \simeq \mathbb{S}^{k+k'}.
\end{align*}
This proves the theorem.
\end{proof}
Now we present the generalization of the previous theorem. We hope the path to generalization will be clear to the reader. We present the theorem without a detailed proof here, to avoid cumbersomeness.
\begin{theorem}
Let $\bigvee_{i=1}^k C_{m_i}$ be the wedge of $k$-many cycle graphs. Then $\I(\bigvee_{i=1}^k C_{m_i})$ is contractible or wedge of spheres.   
\end{theorem}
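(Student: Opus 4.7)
The plan is to bootstrap directly from Lemma~\ref{lem:gen-cyc-wedge} together with Kozlov's classification in Theorem~\ref{thm:ICofPm}. Writing $G = \bigvee_{i=1}^{k} C_{m_i}$ and $G_{m_1,\dots,m_k} = \mathcal{I}(G)$ as in the text, Lemma~\ref{lem:gen-cyc-wedge} supplies the splitting
\[
G_{m_1,\dots,m_k} \simeq \del{a, G_{m_1,\dots,m_k}} \vee \Sigma \link{a, G_{m_1,\dots,m_k}},
\]
so the problem collapses to identifying the homotopy types of the deletion and the link.

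As recorded just before that lemma, both summands are iterated joins of independence complexes of paths:
\[
\del{a, G_{m_1,\dots,m_k}} \simeq \mathcal{I}(P_{m_1-1}) \star \cdots \star \mathcal{I}(P_{m_k-1}), \quad \link{a, G_{m_1,\dots,m_k}} \simeq \mathcal{I}(P_{m_1-2}) \star \cdots \star \mathcal{I}(P_{m_k-2}).
\]
By Theorem~\ref{thm:ICofPm} every factor is either a sphere or a point, and joins interact with these two possibilities in the simplest possible way: a join with a contractible space is contractible, while $\mathbb{S}^{p} \star \mathbb{S}^{q} \simeq \mathbb{S}^{p+q+1}$. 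A straightforward induction on $k$ then shows that $\del{a, G_{m_1,\dots,m_k}}$ is contractible precisely when some $m_i \equiv 2 \pmod 3$ (since then the factor $\mathcal{I}(P_{m_i-1})$ is a point) and otherwise a single sphere, and likewise $\link{a, G_{m_1,\dots,m_k}}$ is contractible precisely when some $m_i \equiv 0 \pmod 3$ and otherwise a single sphere. Suspension preserves this dichotomy.

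Substituting back into the wedge decomposition, $G_{m_1,\dots,m_k}$ is homotopy equivalent to a wedge of at most two spheres, with any contractible summand simply dropping out; this is precisely the claimed alternative between contractibility and a wedge of spheres, and in fact at most two wedge summands are ever needed. The main obstacle on this route lies not in the present theorem but in securing Lemma~\ref{lem:gen-cyc-wedge} uniformly across all congruence patterns of $(m_1,\dots,m_k)$; the authors have already isolated the delicate case (all $m_i \equiv 1 \pmod 3$) by producing the explicit maximal simplex $\sigma$ and invoking Lemma~\ref{lemma:contractibleReduction}. Once that decomposition is in hand, the present theorem is essentially the arithmetic bookkeeping carried out above, and one even obtains explicit formulas for the dimensions of the two sphere summands in terms of the residues of the $m_i$ modulo $3$.
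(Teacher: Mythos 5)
Your proposal is correct and follows essentially the same route as the paper, whose proof is just the one-line remark that the result follows from Lemma~\ref{lem:gen-cyc-wedge} and induction on $k$; your explicit identification of the deletion and link as joins of path independence complexes and the mod-$3$ case analysis is exactly the bookkeeping that induction performs. If anything, your version is more complete than the paper's, and the observation that at most two sphere summands ever occur is a correct sharpening consistent with the two-cycle case worked out in Theorem~\ref{thm:cycleWedgeCycle}.
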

\begin{proof}
The proof follows from using  \Cref{lem:gen-cyc-wedge}  and induction on $k$.
\end{proof}

The next class of graphs is a wedge of a path and a cycle graph. Assume $C_n$ to be the cycle graph whose vertex set is $\{a_1,\dots, a_n\}$ oriented counterclockwise and $P_m$ be the path graph with vertex set is $\{b_1\dots, b_n\}$. For $1\leq k\leq \lceil n/2 \rceil$, consider the wedge graph $G_k=C_n\wpoint{c_k=a_1\sim b_k}P_m$. Note that the function $\lceil~\rceil$ appears to avoid obvious isomorphic classes of graphs. See \cref{fig:my_label} for a visual. Next we compute the independence complex of this graph.
\begin{figure}[h]
    \centering
    \includegraphics[scale=.9]{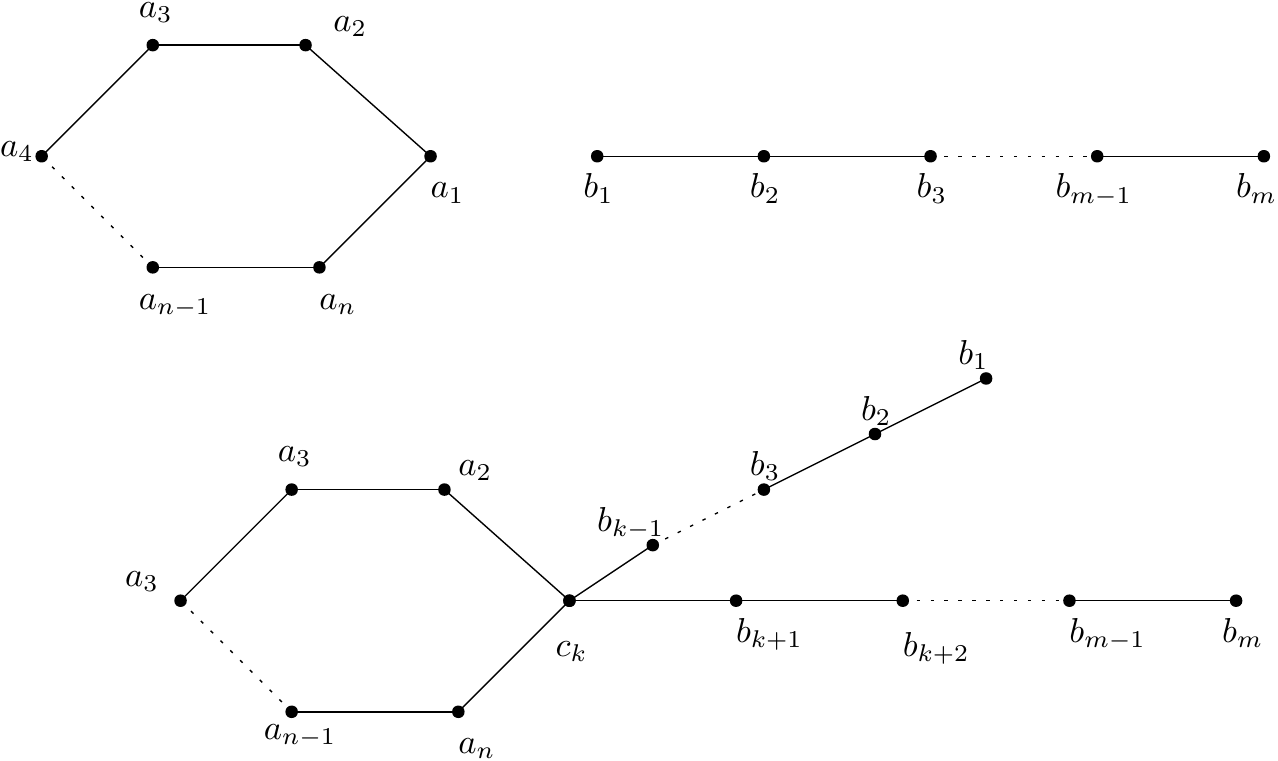}
    \caption{A graph $G_k$}
    \label{fig:my_label}
\end{figure}
\begin{theorem}\label{thm:pathWedgeCycle}
 The independence complex  $\mathcal{I}(G_k)$ is homotopy equivalent to either a point or wedge of spheres.
\end{theorem}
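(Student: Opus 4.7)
The plan is to mirror the strategy used in \Cref{thm:cycleWedgeCycle}: apply \Cref{lem: del link} (or \Cref{lemma:contractibleReduction}) at the wedge point $a=c_k$. Deleting $a$ from $G_k$ leaves a disjoint union of three paths---namely $P_{n-1}$ (the cycle minus its marked vertex), and the two segments $P_{k-1}$ and $P_{m-k}$ of the path $P_m$ on either side of $b_k$. Deleting the closed neighborhood $N[a]=\{a,a_2,a_n,b_{k-1},b_{k+1}\}$ instead leaves $P_{n-3}\sqcup P_{k-2}\sqcup P_{m-k-1}$. Since the independence complex of a disjoint union is the join of the pieces,
\[
\del(a,\I(G_k))\simeq \I(P_{n-1})\star\I(P_{k-1})\star\I(P_{m-k}),\qquad
\link(a,\I(G_k))\simeq \I(P_{n-3})\star\I(P_{k-2})\star\I(P_{m-k-1}),
\]
with the convention that missing factors drop out whenever $k\in\{1,m\}$ or $n\le 3$.

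By \Cref{thm:ICofPm} each factor $\I(P_j)$ is either contractible or a single sphere, and since the join with a contractible space is contractible while the join of two spheres is a sphere, both $\del(a,\I(G_k))$ and $\link(a,\I(G_k))$ are, separately, either contractible or a single sphere. I then split into regimes according to the residues of $(n,k,m)\bmod 3$. When $\link(a,\I(G_k))$ is contractible, \Cref{lem: del link} gives $\I(G_k)\simeq \del(a,\I(G_k))$, which is a point or a sphere. When $\del(a,\I(G_k))$ is contractible, the inclusion $\link\hookrightarrow\del$ is null-homotopic automatically, so \Cref{lem: del link} yields $\I(G_k)\simeq \Sigma\link(a,\I(G_k))$, again a point or sphere.

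The remaining regime is when both $\del(a,\I(G_k))$ and $\link(a,\I(G_k))$ are spheres; here I aim to invoke \Cref{lemma:contractibleReduction}. Following the recipe of Case~2 of \Cref{lemma: Gmn as delvslk}, I select a maximal simplex $\sigma\in\del(a,\I(G_k))$ by picking alternate vertices on each of the three components of $G_k\setminus\{a\}$ while ensuring that $\sigma$ contains at least one neighbor of $a$ in $G_k$ (e.g.\ $a_2$), so that $\sigma\notin\link(a,\I(G_k))$. The core task is to verify that $\del(a,\I(G_k))\setminus\{\sigma\}$ is contractible; once this is in hand, \Cref{lemma:contractibleReduction} gives
\[
\I(G_k)\simeq \del(a,\I(G_k))\vee \Sigma\link(a,\I(G_k)),
\]
a wedge of two spheres, completing the dichotomy.

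The main obstacle is the contractibility of $\del(a,\I(G_k))\setminus\{\sigma\}$ in the third regime, which requires a careful bookkeeping across all congruence classes of $(n,k,m)\bmod 3$ where neither $\del$ nor $\link$ is already contractible, adapting Kozlov's contraction argument underlying \Cref{thm:ICofCn} to a join of three path-complexes rather than one. A parallel route that bypasses some of the bookkeeping is to fold aggressively: whenever $k\ge 4$, applying \Cref{lemma:foldLemma} with $v=b_1$ and $w=b_3$ deletes $b_3$, splitting off a $P_2$ component and leaving the wedge of $C_n$ with a shorter path, so that $\I(G_k)\simeq \Sigma\,\I(G'_{k-3})$; an analogous fold from the far end handles large $m-k$. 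Iterating reduces the argument to finitely many small base cases, in each of which \Cref{thm:ICofPm} and \Cref{thm:ICofCn} give the homotopy type directly.
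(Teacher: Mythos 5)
Your overall route is the same as the paper's: decompose at the wedge point $c_k$, identify $\Lk(c_k,\I(G_k))\simeq \I(P_{n-3})*\I(P_{k-2})*\I(P_{m-k-1})$ and $\Dl(c_k,\I(G_k))\simeq \I(P_{n-1})*\I(P_{k-1})*\I(P_{m-k})$, apply \Cref{lem: del link}, and run through the residues of $n$, $k$, $m-k$ modulo $3$ using \Cref{thm:ICofPm}. Your first two regimes (link contractible, or deletion contractible) are handled correctly and match the paper's Table~\ref{tab:label1}: indeed $n\equiv 1\pmod 3$ forces the link factor $\I(P_{n-3})$ to be contractible, and $n\equiv 2\pmod 3$ forces the deletion factor $\I(P_{n-1})$ to be contractible.

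However, your third regime contains a genuine gap, and it is precisely the regime that produces the wedge of two spheres (e.g.\ $n=3a$, $k=3b+1$, $m-k=3c+2$, giving $\mathbb{S}^{\alpha}\vee\mathbb{S}^{\alpha+1}$ in the paper's Table~\ref{tab:label2}). You defer the entire content of this case to the unproven claim that $\Dl(c_k,\I(G_k))\setminus\{\sigma\}$ is contractible for a suitably chosen maximal simplex $\sigma$; you call this ``the core task'' and do not carry it out, nor do you specify $\sigma$ precisely or explain why removing one facet from a complex that is merely homotopy \emph{equivalent} to a sphere should leave something contractible. As written, the proof is incomplete exactly where the interesting answer appears. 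There is a much shorter way to close the gap that avoids \Cref{lemma:contractibleReduction} entirely: the inclusion $\Lk(c_k,\I(G_k))\hookrightarrow \Dl(c_k,\I(G_k))$ is the join of the three factor inclusions $\I(P_{n-3})\hookrightarrow\I(P_{n-1})$, $\I(P_{k-2})\hookrightarrow\I(P_{k-1})$ and $\I(P_{m-k-1})\hookrightarrow\I(P_{m-k})$, and a join of maps is null-homotopic as soon as one factor is. In your third regime one necessarily has $n=3a$, so the cycle factor is, up to homotopy, a map $\mathbb{S}^{a-2}\to\mathbb{S}^{a-1}$, which is null-homotopic for dimension reasons; hence $\Lk(c_k,\I(G_k))$ is contractible in $\Dl(c_k,\I(G_k))$ and \Cref{lem: del link} applies directly (this is presumably what the paper's ``it is easy to see'' is hiding). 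Your alternative folding route is plausible but likewise only sketched; if you pursue it you still need to enumerate and solve the base cases with $k\le 3$ and $m-k\le 2$, which again reduce to link/deletion or fold arguments on $C_n$ with a short pendant path.
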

\begin{proof}
Let $c_k$ be the wedge point of the graph $G_k$. 
Let $C_n(n-3)$ be the subgraph of $G_k$ on vertices $\{a_3,\dots,a_{n-1}\}$. One can see that $C_n(n-3)$ is isomorphic to the path graph $P_{n-3}$ on $n-3$ vertices. Let $P_m(k-2)$ be the subgraph of $P_m$ on vertices $\{b_1,\dots,b_{k-2}\}$ and $P_m(m-k-1)$ be the subgraph of $P_m$ on $\{b_{k+2},\dots, b_{m}\}$. One can see that $P_{m}(k-2)$ is isomorphic to the path graph $P_{k-2}$ and $P_{m}(m-k-1)$ is isomorphic to $P_{m-k-1}$.
Note that the link $\Lk(c_k,\I(G_k))$ can be described as 
\[\bigg \{ A\sqcup B\sqcup C \mid A\in \I(C_n(n-3)), B\in P_m(k-2) \text{ and } C\in  P_{m}(m-k-1)  \bigg\}.\]

Therefore, we get the following \[\Lk(c_k,\I(G_k))\simeq \I(P_{n-3})*\I(P_{k-2})*\I(P_{m-k-1})\] and \[\Dl(c_k,\I(G_k))\simeq \I(P_{n-1})*\I(P_{k-1})*\I(P_{m-k}).\] It is easy to see that $\Lk(c_k,\I(G_k))$ is contractible in $\Dl(c_k,\I(G_k))$. Then by \Cref{lem: del link}, we get $\I(G_k)\simeq
\Dl(c_k,\I(G_k))\vee \Sigma \Lk(c_k,\I(G_k))$. \textcolor{black}{Let $\alpha =a+b+c$. }

\begin{table}[H]
    \centering
    \begin{tabular}{|c|c|c|c|c|c|c|c|}
        \hline
        & & \multicolumn{3}{c|}{$\Dl$} & \multicolumn{3}{c|}{$\Lk$} \\
        \hline
        & \backslashbox{$m-k$ }{$k$ } & $3b$ & $3b+1$ & $3b+2$ & $3b$ & $3b+1$ & $3b+2$  \\ 
        \hline 
        \multirow[c]{3}{*}{$n=3a$} & $3c$ & $\mathbb{S} ^{\alpha-1}$ & $\mathbb{S} ^{\alpha-1}$ & \text{pt} & \text{pt} & \text{pt} & \text{pt} \\ 
        \cline{2-8}
        & $3c+1$ & \text{pt} & \text{pt} & \text{pt} & \text{pt} & $\mathbb{S} ^{\alpha}$ & $\mathbb{S} ^{\alpha}$ \\ 
        \cline{2-8}
        & $3c+2$ & $\mathbb{S} ^{\alpha}$ & $\mathbb{S} ^{\alpha}$ & \text{pt} & \text{pt} & $\mathbb{S} ^{\alpha}$ & $\mathbb{S} ^{\alpha}$ \\ 
        \hline
        \multirow[c]{3}{*}{$n=3a+1$} & $3c$ & $\mathbb{S} ^{\alpha-2}$ & $\mathbb{S} ^{\alpha-2}$ & \text{pt} & \text{pt} & \text{pt} & \text{pt} \\ 
        \cline{2-8}
        & $3c+1$ & \text{pt} & \text{pt} & \text{pt} & \text{pt} & \text{pt} & \text{pt} \\ 
        \cline{2-8}
        & $3c+2$ & $\mathbb{S} ^{\alpha-1}$ & $\mathbb{S} ^{\alpha-1}$ & \text{pt} & \text{pt} & \text{pt} & \text{pt} \\ 
        \hline
        \multirow[c]{3}{*}{$n=3a+2$} & $3c$ & \text{pt} & \text{pt} & \text{pt} & \text{pt} & \text{pt} & \text{pt} \\ 
        \cline{2-8}
        & $3c+1$ & \text{pt} & \text{pt} & \text{pt} & \text{pt} & $\mathbb{S} ^{\alpha+1}$ & $\mathbb{S} ^{\alpha+1}$ \\ 
        \cline{2-8}
        & $3c+2$ & \text{pt} & \text{pt} & \text{pt} & \text{pt} & $\mathbb{S} ^{\alpha+1}$ & $\mathbb{S} ^{\alpha+1}$ \\ 
        \hline
    \end{tabular}
    \caption{Computation for $\Dl$  and $\Lk$ }
    \label{tab:label1}
\end{table}

\begin{table}[H]
    \centering
    \begin{tabular}{|c|c|c|c|c|}
        \hline
        & \backslashbox{$m-k$ }{$k$ } & $3b$ & $3b+1$ & $3b+2$\\ 
        \hline 
        \multirow[c]{3}{*}{$n=3a$} & $3c$ & $\mathbb{S} ^{\alpha-1}$ & $\mathbb{S} ^{\alpha-1}$ & \text{pt}\\ 
        \cline{2-5}
        & $3c+1$ & \text{pt} & $\mathbb{S} ^{\alpha+1}$  & $\mathbb{S} ^{\alpha+1}$ \\ 
        \cline{2-5}
        & $3c+2$ & $\mathbb{S} ^{\alpha}$ & $\mathbb{S} ^{\alpha}\vee \mathbb{S} ^{\alpha+1} $ & $\mathbb{S} ^{\alpha+1}$ \\ 
        \hline
        \multirow[c]{3}{*}{$n=3a+1$} & $3c$ & $\mathbb{S} ^{\alpha-2}$ & $\mathbb{S} ^{\alpha-2}$ & \text{pt}\\ 
        \cline{2-5}
        & $3c+1$ & \text{pt} & \text{pt} & \text{pt} \\ 
        \cline{2-5}
        & $3c+2$ & $\mathbb{S} ^{\alpha-1}$ & $\mathbb{S} ^{\alpha-1}$ & \text{pt}\\ 
        \hline
        \multirow[c]{3}{*}{$n=3a+2$} & $3c$ & \text{pt} & \text{pt} & \text{pt}\\ 
        \cline{2-5}
        & $3c+1$ & \text{pt} & $\mathbb{S} ^{\alpha+2}$  & $\mathbb{S} ^{\alpha+2}$ \\ 
        \cline{2-5}
        & $3c+2$ & \text{pt} & $\mathbb{S} ^{\alpha+2}$  & $\mathbb{S} ^{\alpha+2}$ \\ 
        \hline
    \end{tabular}
    \caption{Independence complex}
    \label{tab:label2}
\end{table}
\end{proof}

Finally, we will compute the independence complex of a wedge of $k$ many path graphs. Note that the wedge graphs vary with the choice of wedge point. Furthermore, the graph obtained after choosing one wedge point, it can be viewed as \emph{terminal wedge of path graphs}.

\begin{figure}[!htb]
    \centering
    \def\svgwidth{0.5\columnwidth}
    \import{./figures/}{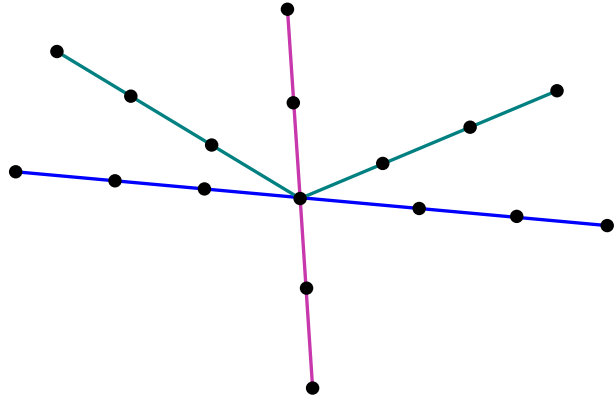}

    \caption{$P_7\vee P_7 \vee P_5 = P_4\vee P_4\vee P_4\vee P_4\vee P_3\vee P_3$\label{fig:PathWedgePathMany}}
\end{figure}

\begin{theorem}\label{thm:pathWedgePathMany}
    The independence complex $\I \left(\bigvee_{i=1}^k P_{m_i}\right)$ of terminal wedge $k$ many path graphs is either a point or a sphere.
\end{theorem}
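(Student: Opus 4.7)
The proof mirrors the strategy of \Cref{thm:pathWedgePath}, extended by induction on the total number of vertices $N=\sum_{i}m_i$. Label the vertices of leg $i$ of the terminal wedge $G=\bigvee_{i=1}^{k}P_{m_i}$ as $a=v_{0}^{i},v_{1}^{i},\ldots,v_{m_{i}-1}^{i}$, with wedge point $a$.

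The main reduction is as follows: if some leg $i$ has $m_i \ge 4$, set $v=v_{m_i-1}^{i}$ (the far leaf) and $w=v_{m_i-3}^{i}$. Then $N(v)=\{v_{m_i-2}^{i}\}\subseteq\{v_{m_i-4}^{i},v_{m_i-2}^{i}\}=N(w)$, so by \Cref{lemma:foldLemma} we have $\I(G)\simeq\I(G\setminus w)$. Deleting $w$ separates $G\setminus w$ into a disjoint $P_{2}$ (the tail $v_{m_i-2}^{i}v_{m_i-1}^{i}$) and a terminal wedge $G'$ in which leg $i$ is shortened to $P_{m_i-3}$. Since independence complexes of disjoint unions are joins and $\I(P_{2})\simeq\mathbb{S}^{0}$,
\[
\I(G)\;\simeq\;\I(P_{2})\star\I(G')\;\simeq\;\Sigma\,\I(G'),
\]
and the inductive hypothesis gives that $\I(G')$ is a point or a sphere, whose suspension is again a point or a sphere.

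Iterating this reduction we are left with the case $m_i\in\{1,2,3\}$ for every $i$. Legs of length $1$ are invisible and can be discarded; if no legs remain, $G$ is a single point. If every remaining leg has $m_i=2$, the graph is the star $K_{1,k'}$ and a direct inspection shows $\I(K_{1,k'})=\{a\}\sqcup\Delta^{k'-1}\simeq\mathbb{S}^{0}$. Otherwise some $m_i=3$; then the leaf $v_{2}^{i}$ satisfies $N(v_{2}^{i})=\{v_{1}^{i}\}\subseteq N(a)$, so the fold lemma with $w=a$ gives $\I(G)\simeq\I(G\setminus a)\simeq\star_{j}\I(P_{m_j-1})$. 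Each factor is a point or a sphere by \Cref{thm:ICofPm}, and the join is a point if any factor is a point and a sphere otherwise.

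The main bookkeeping subtlety is that a reduction $m_i\mapsto m_i-3$ may collapse leg $i$ entirely (when $m_i=4$ the residual leg is just the wedge point) or leave it as a shorter path; in either case the resulting graph is again a terminal wedge of paths on fewer vertices, so the induction proceeds cleanly.
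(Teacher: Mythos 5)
Your argument is correct, and it runs on the same engine as the paper's proof: repeated use of the fold lemma (\Cref{lemma:foldLemma}) to strip three vertices at a time off a leg, each strip contributing a join with $\I(P_2)\simeq\mathbb{S}^0$, i.e.\ a suspension. The packaging, however, is genuinely different. The paper folds every leg all the way down according to its residue class mod $3$ and writes closed-form join decompositions in four separate cases (some $m_i\equiv 0$, all $\equiv 1$, all $\equiv 2$, mixed), while you run one uniform induction on the vertex count and isolate all residue bookkeeping in a base case of spiders with legs of length at most $3$. Your version buys two things. First, it is cleaner: the single reduction $\I(G)\simeq\Sigma\,\I(G')$, together with the elementary computations $\I(K_{1,k'})\simeq\mathbb{S}^0$ and $\I(G)\simeq \star_{j}\I(P_{m_j-1})$ (via folding at the wedge point when some leg is a $P_3$), covers every case at once. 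Second, it gets the dimensions right where the paper's explicit formulas slip: in the paper's mixed case the legs of the form $3l_i+2$ leave behind a star $K_{1,r}$ at the wedge point whose independence complex is $\mathbb{S}^0$ rather than a point, so the answer should be $\mathbb{S}^{l_1+\cdots+l_k}$ and not the stated $\mathbb{S}^{l_1+\cdots+l_k-1}$; for instance $P_7\wpoint{a}P_5=P_{11}$ has $\I(P_{11})\simeq\mathbb{S}^3$, which your reduction reproduces as $\Sigma^2\I(P_5)\simeq\Sigma^2\mathbb{S}^1$, while the paper's formula would give $\mathbb{S}^2$. Since the theorem only asserts ``point or sphere,'' both arguments establish the statement, but yours is the one whose bookkeeping I would trust.
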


\begin{proof}
    \begin{figure}[!htb]
        \centering
    \def\svgwidth{0.8\columnwidth}
\begingroup%
  \makeatletter%
  \providecommand\color[2][]{%
    \errmessage{(Inkscape) Color is used for the text in Inkscape, but the package 'color.sty' is not loaded}%
    \renewcommand\color[2][]{}%
  }%
  \providecommand\transparent[1]{%
    \errmessage{(Inkscape) Transparency is used (non-zero) for the text in Inkscape, but the package 'transparent.sty' is not loaded}%
    \renewcommand\transparent[1]{}%
  }%
  \providecommand\rotatebox[2]{#2}%
  \newcommand*\fsize{\dimexpr\f@size pt\relax}%
  \newcommand*\lineheight[1]{\fontsize{\fsize}{#1\fsize}\selectfont}%
  \ifx\svgwidth\undefined%
    \setlength{\unitlength}{856.03989488bp}%
    \ifx\svgscale\undefined%
      \relax%
    \else%
      \setlength{\unitlength}{\unitlength * \real{\svgscale}}%
    \fi%
  \else%
    \setlength{\unitlength}{\svgwidth}%
  \fi%
  \global\let\svgwidth\undefined%
  \global\let\svgscale\undefined%
  \makeatother%
  \begin{picture}(1,0.80855731)%
    \lineheight{1}%
    \setlength\tabcolsep{0pt}%
    \put(0,0){\includegraphics[width=\unitlength,page=1]{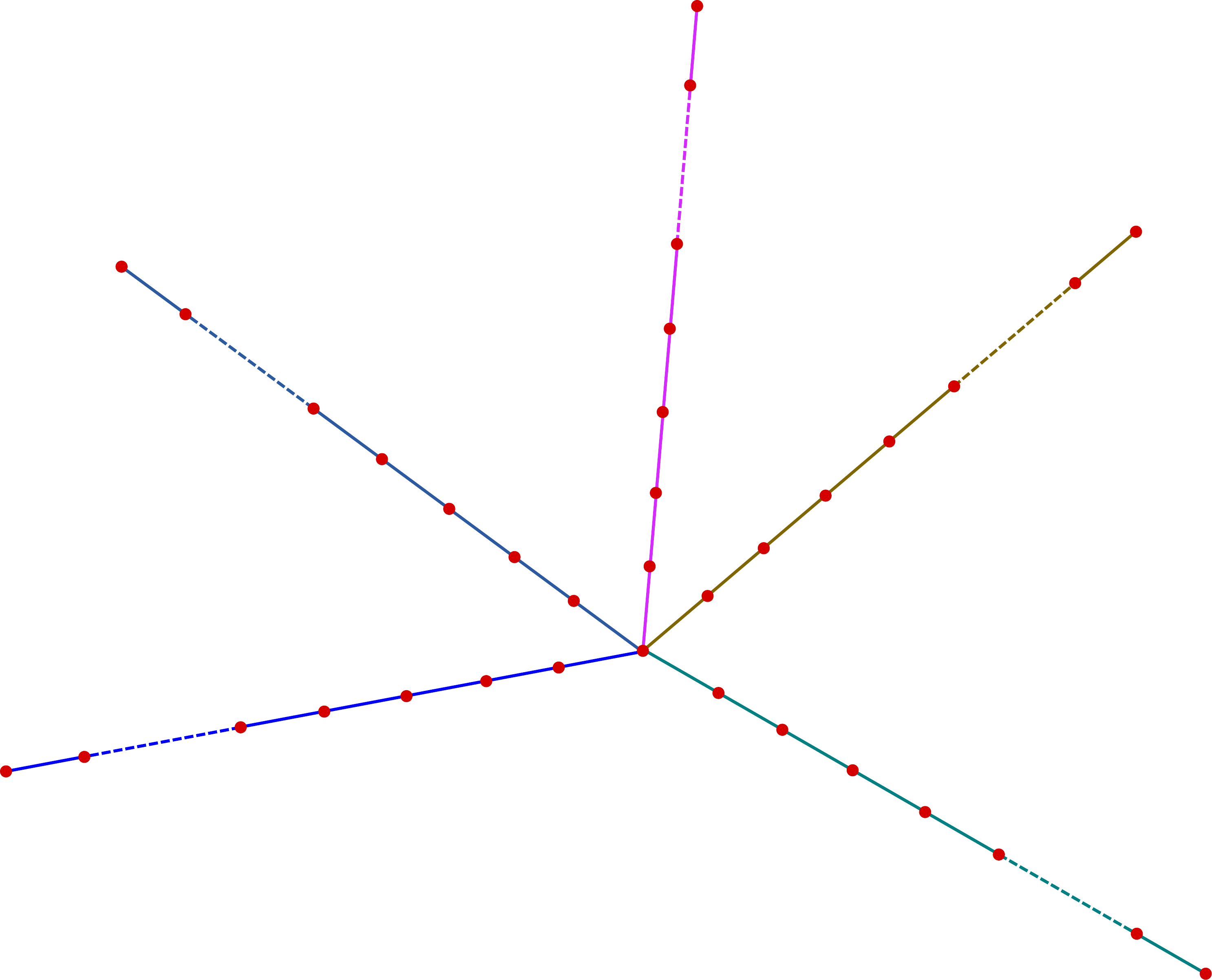}}%
    \put(0.54417014,0.26804129){\makebox(0,0)[lt]{\lineheight{1.25}\smash{\begin{tabular}[t]{l}$a$\end{tabular}}}}%
    \put(0.80054771,0.52751963){\color[rgb]{0.50196078,0.4,0}\rotatebox{40.042593}{\makebox(0,0)[lt]{\lineheight{1.25}\smash{\begin{tabular}[t]{l}$P_{m_1}$\end{tabular}}}}}%
    \put(0.52818983,0.6540887){\color[rgb]{0.83137255,0.16470588,1}\rotatebox{83.93477}{\makebox(0,0)[lt]{\lineheight{1.25}\smash{\begin{tabular}[t]{l}$P_{m_2}$\end{tabular}}}}}%
    \put(0.19334054,0.5483635){\color[rgb]{0.17254902,0.35294118,0.62745098}\rotatebox{-36.840006}{\makebox(0,0)[lt]{\lineheight{1.25}\smash{\begin{tabular}[t]{l}$P_{m_3}$\end{tabular}}}}}%
    \put(0.10396932,0.22376125){\color[rgb]{0,0,1}\rotatebox{10.424058}{\makebox(0,0)[lt]{\lineheight{1.25}\smash{\begin{tabular}[t]{l}$P_{m_4}$\end{tabular}}}}}%
    \put(0.77672246,0.15269643){\color[rgb]{0,0.50196078,0.50196078}\rotatebox{-27.900282}{\makebox(0,0)[lt]{\lineheight{1.25}\smash{\begin{tabular}[t]{l}$P_{m_k}$\end{tabular}}}}}%
  \end{picture}%
\endgroup%

        \caption{Terminal wedge of path graphs\label{fig:sample}}
    \end{figure}
    First we focus on the case if one of them is of the form $3m$. Without loss of generality we will assume that $m_1=3m$. 
    \begin{figure}[h]
        \centering
        \includegraphics[scale=.9]{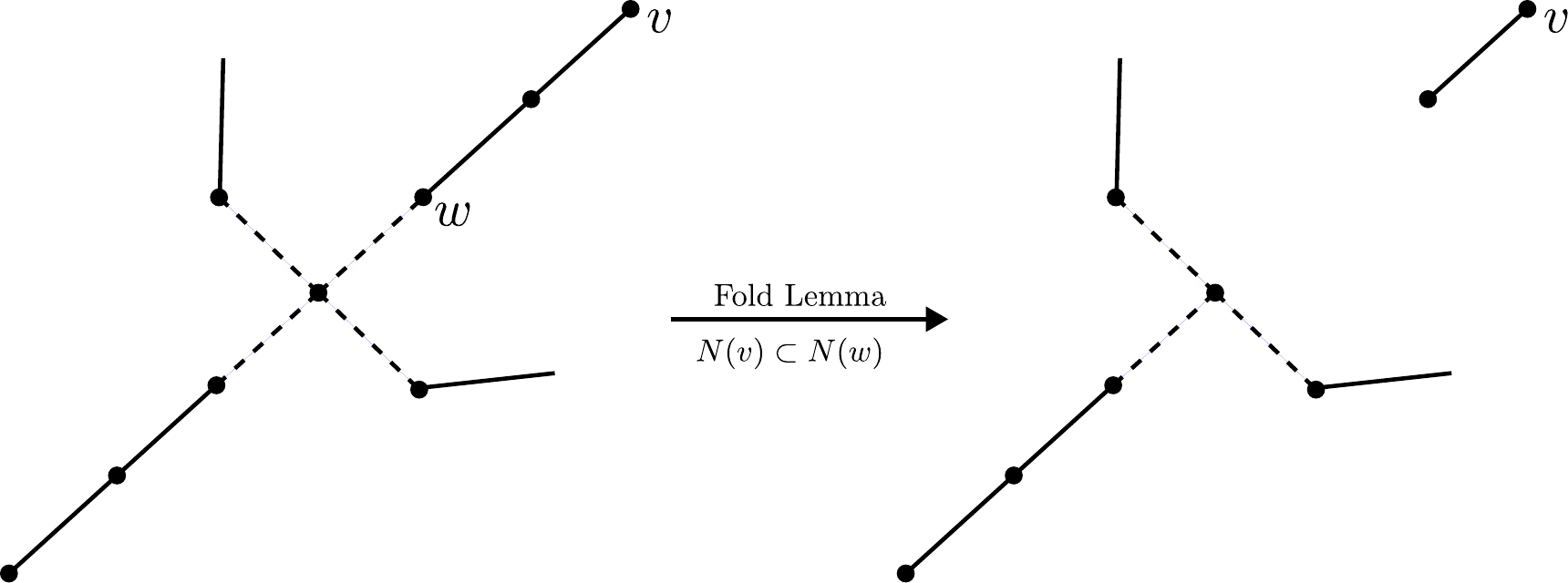}
        \caption{Reduction step}
        \label{fig:reduction-step}
    \end{figure}
    Applying \autoref{lemma:foldLemma} (see \Cref{fig:reduction-step}) to $P_{m_1}$, we get that 
    \begin{equation}\label{eq:PathWedge3m}
        \I \left(P_{m_1}\vee P_{m_2}\vee\cdots\vee P_{m_k}\right) = \left(\I(P_2)\right)^{\star(m)} \star \I(P_{m_2-1}) \star \cdots \star \I(P_{m_k-1}).
    \end{equation} 
    Next, we will compute the independence complex of a wedge of path graphs when all of them are either of the form $3l_i+1$ or $3l_i+2$. Now we consider the case if all of them are of the form $3l_i+1$. Then using the \autoref{lemma:foldLemma} (See \cref{fig:reduction-step}), we get that independence complexes 
    \begin{align*}
         \I\left(\bigvee_{i=1}^k P_{m_i}\right) & = \I(\{a\}) \star \I(P_2)^{\star (l_1)} \star \I(P_2)^{\star(l_2)} \star \cdots \star \I(P_2)^{\star(l_k)} \\
         & = \textup{pt}.
    \end{align*}   
    Next, assume that all $m_i$'s are of the form $3l_i+2$. In this case, using \autoref{lemma:foldLemma}  (See \cref{fig:reduction-step}), we get that independence complexes 
    \begin{align*}
        \I\left(\bigvee_{i=1}^k P_{m_i}\right) & = \I(P_2) \star \I(P_2)^{\star (l_1)} \star \I(P_2)^{\star(l_2)} \star \cdots \star \I(P_2)^{\star(l_k)} \\
        & = \mathbb{S}^{l_1 + \cdots + l_k}.
   \end{align*} 
   Finally, we will compute if some of them are of the form $3l_i+1$ and some are of the form $3l_i+2$. We assume that $k=t+r$, where $m_i$'s are of the form $3l_i+1$ for $i=1,2,\cdots,t$ and 
   the rest of them are of the form $3l_i+2$ for $i=t+1,\cdots,t+r$. Applying \autoref{lemma:foldLemma} repeatedly (See \cref{fig:reduction-step}), we obtain 
   \begin{align*}
        I \left(\bigvee_{i=1}^k P_{m_i}\right) & = \mathbb{S}^{l_1+\cdots+l_k-1}.
   \end{align*}
\end{proof}

\bibliographystyle{siam}

\end{document}